\title{On $\alpha$-minimizing hypercones}
\author{Peter \textsc{Lewintan}\footnote{peter.lewintan@uni-due.de, University of Duisburg-Essen, Germany}}
\date{October 29, 2018}
\theoremstyle{plain}
\newtheorem{theorem}{Theorem}[section]
\newtheorem{lemma}[theorem]{Lemma}
\newtheorem{proposition}[theorem]{Proposition}
\newtheorem*{acknowledgement}{Acknowledgement}
\theoremstyle{definition}
\newtheorem{definition}[theorem]{Definition}
\theoremstyle{remark}
\newtheorem{remark} [theorem]{Remark}
\newtheorem{example}[theorem]{Example}
\newcommand{\boldm}[1] {\mathversion{bold}#1\mathversion{normal}}
\newcommand{\R}{\mathds R}
\newcommand{\p}{\text{\boldm{$p_{m}$}}}
\newcommand{\Hdif}{H_{m,\alpha}}
\newcommand{\g}{{g_{m,\alpha}}}
\newcommand{\gam}{\text{\boldm{$\gamma_{m,\alpha}$}}}
\newcommand{\Per}{\EuScript{P}_\alpha}
\newcommand{\neghphantom}[1]{\settowidth{\dimen0}{#1}\hspace*{-\dimen0}}
\renewcommand{\P}{\text{\boldm{$P_{m,\alpha}$}}}
\renewcommand{\t}{\text{\boldm{$t_{m,\alpha}$}}}
\newcommand{\w}{\text{\boldm{$w_{m,\alpha}$}}}
\renewcommand{\v}{\text{\boldm{$v_{m,\alpha}$}}}
\newcommand{\F}{\text{\boldm{$F_{m,\alpha}$}}}
\LetLtxMacro{\OldSqrt}{\sqrt}
\newcommand{\ClosedSqrt}[1][\hphantom{3}]{\def\DHLindex{#1}\mathpalette\DHLhksqrt}
    \newcommand*\bold@name{bold}
    \def\DHLhksqrt#1#2{%
        \setbox0=\hbox{$#1\OldSqrt{#2\,}$}\dimen0=\ht0\relax%
        \advance\dimen0-0.2\ht0\relax
        \setbox2=\hbox{\vrule height\ht0 depth -\dimen0}%
        {\hbox{$#1\expandafter\OldSqrt\expandafter[\DHLindex]{#2\,}$}
        \lower\ifx\math@version\bold@name0.6pt\else0.4pt\fi\box2}
    }
    \renewcommand*{\sqrt}[2][\ ]{\ClosedSqrt[\leftroot{-2}\uproot{1}#1]{#2}\kern0.1em}
\newcommand{\skalarProd}[2]{\big\langle#1,#2\big\rangle}
\newcolumntype{"}{@{\hskip\tabcolsep\vrule width 1pt\hskip\tabcolsep}}
\begin{document}

\maketitle

\begin{abstract}
 In this paper we considerably extend the class of known $\alpha$-minimizing hypercones using sub-calibration methods. Indeed, the improvement of previous results follows from a careful analysis of special cubic and quartic polynomials.
\end{abstract}

\textit{AMS 2010 MSC:}
 Primary: 53C38; Secondary: 53A10, 49Q10, 49Q15, 58E15.

\textit{Keywords:}
 $\alpha$-minimizing hypercones, calibrations, foliations.


\section{Introduction}
Let $P_0$ and $P_1$ be two distinct points in $\R\times\R_{\geq0}$ and consider for $\alpha\geq0$ the variational problem
\[
 \int y^\alpha\ d\mathscr{H}^1(x,y) \ \to \ \min
\]
within the class
\[
 \mathscr{C}\coloneqq\{\, \mathfrak{K}\colon [0,1]\to\R\times\R_{\geq0} \ \text{ Lipschitz s.t. } \ \mathfrak{K}(0)=P_0, \mathfrak{K}(1)=P_1 \,\}.
\]
Hence, with $\alpha=0$ we are looking for the shortest curve joining $P_0$ and $P_1$, with $\alpha=\frac12$ we gain a parametric version of the brachistochrone-problem, and the case $\alpha=1$ leads to rotationally symmetric minimal surfaces in $\R^3$. On the other hand, the variational integral with $\alpha=1$ appears when considering the potential energy of heavy chains.

Of course, the shortest path between $P_0$ and $P_1$ is a line, and the minimizing curve in the case $\alpha=\frac12$ was named brachistochrone. However, the variational problem with $\alpha=1$ may possess two distinct minimizers, namely a catenary and a Goldschmidt curve, which consists of three straight lines, cf. \cite[ch. 8 sec. 4.3]{GH2}.

In order to prove the minimality of the above mentioned curves it is sufficient to embed the corresponding curve into a field of extremals\footnote{An argument which goes back to \textsc{Weierstrass}.}, i.e. into a foliation of extremal curves, cf. \cite[ch. 6 sec. 2.3]{GH1}. In fact, this can be directly justified by the divergence theorem. For this purpose let us look at the vector field
\[
 \xi(x,y)\coloneqq y^\alpha\cdot\nu(x,y),
\]
where $\nu(x,y)$ are the normal fields orienting the curves from the foliation. Since all these curves are extremals, the vector field $\xi$ is divergence-free. The conclusion then follows by applying the divergence theorem to the vector field $\xi$ on the open set which is bounded by a critical curve and a comparison curve. In geometric measure theory setting, the critical curve is said being calibrated by $\xi$, and the vector field $\xi$ is called \textit{calibration}.\footnote{Such method of conclusion is applicable even in a more general context and is well-known as Federer's differential form argument, cf. \cite[5.4.19]{Federer}.}

In this paper we consider the higher dimensional variational problem and prove the minimizing property of special hypercones. Therefor we will construct suitable foliations. The crux hereby is to find an auxiliary function whose level sets are extremals.

First, we will weaken our considerations and look at ``inner'' and ``outer'' variations separately as in \cite{dPP}. This gives simplified proofs and yields sub-solutions and sub-calibrations. The advantage of this weakened ansatz is that we can gain specific auxiliary functions. Moreover, we will show that a careful analysis of extremals as in \cite{Davini} provides better results to our variational problem but loses the concrete representation of an auxiliary function.
\subsection{The main result}
 Let $m\in\{\,2,3,\dots\,\}$ and let $\EuScript{M}$ be an oriented Lipschitz-hypersurface in $\R^m\times\R_{\geq 0}$. Its \emph{$\alpha$-energy} is given by
\begin{equation}\label{eq:Lew:varprob}
 \EuScript{E}_\alpha(\EuScript{M})\coloneqq \int_\EuScript{M} y^\alpha d\mathscr{H}^m(z),
\end{equation}
where we use the notation $z\coloneqq(x,y)\in\R^m\times\R_{\geq0}$ and denote by $\mathscr{H}^m$ the $m$-dimensional Hausdorff measure. We show
\begin{theorem}\label{Satz:Lew:1}
There exists an algebraic number $\alpha_m >\frac2m$ such that the cone
 \[
  \mathcal{C}^\alpha_m\coloneqq \left\{0\leq y\leq \sqrt{\frac{\alpha}{m-1}}\cdot |x|\right\}, \qquad \text{with arbitrary $\alpha\geq \alpha_m$,}
 \]
 is a local $\alpha$-perimeter minimizer in $\R^m\times\R_{\geq0}$.
\end{theorem}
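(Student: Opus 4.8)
The plan is to realize $\mathcal{C}^{\alpha}_{m}$ as a set \emph{sub}-calibrated for the weighted perimeter $\Per(E)=\int_{\partial^{*}E}y^{\alpha}\,d\mathscr{H}^{m}$, in the spirit of the field-of-extremals argument recalled in the Introduction. It suffices to construct a bounded, locally Lipschitz vector field $\xi$ defined near a compact piece of the hypercone $\Gamma\coloneqq\partial\mathcal{C}^{\alpha}_{m}=\{y=\sqrt{\alpha/(m-1)}\,|x|\}$ with $|\xi|\le1$, with $\xi$ equal along $\Gamma$ to the unit normal of $\Gamma$, and with $\operatorname{div}(y^{\alpha}\xi)$ of the appropriate (locally constant) sign on each of the two open components into which $\Gamma$ divides the neighbourhood. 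Then, for any competitor $F$ agreeing with $\mathcal{C}^{\alpha}_{m}$ outside a compact set, the Gauss--Green formula gives $\Per(F)\ge\int_{\partial^{*}F}y^{\alpha}\langle\xi,\nu_{F}\rangle$, and since $\xi=\nu_{\Gamma}$ along $\Gamma$ the right-hand side equals $\Per(\mathcal{C}^{\alpha}_{m})$ plus the integral of $\operatorname{div}(y^{\alpha}\xi)$ over $F\setminus\mathcal{C}^{\alpha}_{m}$ minus its integral over $\mathcal{C}^{\alpha}_{m}\setminus F$; both contributions are nonnegative by the sign conditions, and the free boundary on $\{y=0\}$ contributes nothing since $y^{\alpha}$ vanishes there. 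Hence $\Per(\mathcal{C}^{\alpha}_{m})\le\Per(F)$.

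\textbf{Reduction to profile curves.} Since $y^{\alpha}\,d\mathscr{H}^{m}$ is invariant under the $O(m)$-action on $x$, I would seek $\xi$ as the unit normal of a foliation of a one-sided neighbourhood of $\Gamma$ by hypersurfaces of revolution over profile curves in the open quarter-plane $\{(r,y):r,y>0\}$, $r=|x|$. A genuine calibration forces each leaf to be a critical point of $\int y^{\alpha}r^{m-1}\,ds$; for the one-sided inequalities it is enough that the weighted mean curvature of the leaves has the correct sign on each side of $\Gamma$, which is what one really exploits in the separation of inner and outer variations as in \cite{dPP} (the inner variations producing the first integral $\F$, the outer ones the curvature equation). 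A short computation turns the Euler--Lagrange equation of the profile functional into
\[
 y\,y''=(1+y'^{2})\Bigl(\alpha-\tfrac{(m-1)\,y\,y'}{r}\Bigr),
\]
confirms that the ray $y=\sqrt{\alpha/(m-1)}\,r$ solves it, and shows that a competing ray $\{y=c'r\}$ is a strict super-solution when $(m-1)c'^{2}<\alpha$ and a strict sub-solution when $(m-1)c'^{2}>\alpha$; so the leaves of the foliation must be genuinely curved.

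\textbf{The phase portrait and the polynomials (the crux).} Exploiting the homogeneity of $\int y^{\alpha}r^{m-1}\,ds$ I lower the order: with $v=y/r$ and $\tau=\log r$ the extremals become the trajectories of $\dot v=w$, $\dot w=v^{-1}(1+(v+w)^{2})(\alpha-(m-1)v(v+w))-w$, and $\Gamma$ is the equilibrium $(v,w)=(\sqrt{\alpha/(m-1)},0)$, approached as $r\to\infty$. Linearising there yields the characteristic polynomial
\[
 \lambda^{2}+(m+\alpha)\,\lambda+2(m-1+\alpha),
\]
whose roots have negative real part and are real -- so that the equilibrium is a node, not a spiral -- precisely when $\alpha^{2}+(2m-8)\alpha+(m^{2}-8m+8)\ge0$, i.e.\ (in the range of interest) $\alpha\ge4-m+2\sqrt{2}$. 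A spiral would make the leaves oscillate across $\Gamma$ and hence preclude minimality, so node behaviour is necessary; the substance of the proof is to upgrade it to a \emph{sufficient} condition. One must show that, for $\alpha$ beyond some threshold, the two trajectories leaving the node stay on their respective sides of $\{v=\sqrt{\alpha/(m-1)}\}$ for all $r$ and, together with their dilates, foliate a full one-sided neighbourhood -- equivalently, that the auxiliary function $\g$ reconstructed from the phase portrait by quadrature is monotone and carries the correct sign of $\operatorname{div}(y^{\alpha}\xi)$; and one must separately dominate the ``Goldschmidt-type'' competitors hugging $\{y=0\}$ and resolving the vertex by a small truncated cone, which by a direct energy comparison already forces a necessary bound $\alpha\ge q(m)$ with $q$ rational. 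After clearing denominators, all of these requirements become the non-vanishing, with a prescribed sign on a definite $v$-interval, of a family of explicit cubic and quartic polynomials $\P$ whose coefficients depend on $m$ and $\alpha$; the admissible $\alpha$ are those for which the relevant resultants and discriminants of the $\P$, viewed as polynomials in $\alpha$, have no root in the pertinent range, and $\alpha_{m}$ is the largest such root -- hence an algebraic number. That $\alpha_{m}>2/m$ is then checked by evaluating the governing polynomial at $\alpha=2/m$ and inspecting its sign.

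\textbf{Conclusion and the main difficulty.} For $\alpha\ge\alpha_{m}$ the previous step furnishes the foliation, hence $\xi$, hence by the divergence argument of the first step the local $\alpha$-perimeter minimality of $\mathcal{C}^{\alpha}_{m}$; near the axis $\{x=0\}$ and at the vertex, where the foliation degenerates, one works on a truncated region and lets the truncation shrink, the $\alpha$-perimeter of the removed caps tending to $0$. The separation of inner and outer variations as in \cite{dPP} merely streamlines the bookkeeping -- trading the exact leaves for explicit sub-solutions at the price of a slightly larger $\alpha_{m}$ -- whereas the analysis of true extremals as in \cite{Davini} gives the sharper value at the cost of losing a closed form for $\g$. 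I expect the real obstacle to be exactly this third step: controlling the global phase portrait uniformly for $\alpha\ge\alpha_{m}$, which reduces to the delicate root analysis of the cubic and quartic polynomials $\P$, together with the companion estimate $\alpha_{m}>2/m$.
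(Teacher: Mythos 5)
Your overall framework is the same as the paper's (rotational reduction, a foliation of each side of the cone by extremal leaves reconstructed from a first--order ODE in the angular variable, and the weighted field $\pm y^\alpha\nabla F/|\nabla F|$ used as a (sub-)calibration; your node condition is exactly the paper's remark \ref{Lew:Bem:Koeff} on lower bounds). But the theorem's content lives entirely in the step you defer. The paper proves it by writing the extremal equation for $w=\dot v$ as \eqref{eq:Lew:DiffglY}, exhibiting the explicit upper solution $\g$, and testing $\gamma\cdot\g$ with $\gamma\in(0,1-\frac{1}{m+\alpha})$ as a lower solution; the lower-solution inequality reduces to a quadratic inequality in $\cos(2t)$ and then to the nonnegativity of an explicit quartic $\P(\gamma)$, and the existence of an admissible $\gamma$ reduces, via the depressed quartic, its resolvent cubic and the discriminant $\theta_m(\alpha)$, to the nonnegativity of an explicit degree-eight polynomial $\p(\alpha)$. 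Crucially, Descartes' rule fails for $\p$ (it has three or five sign changes), and the paper must invoke Sturm's theorem to show $\p$ has exactly one positive root, so that the admissible set of $\alpha$ is genuinely a half-line $[\alpha_m,\infty)$, and then checks $\p(\frac2m)<0<\p(\frac{12}{m})$ to get $\frac2m<\alpha_m<\frac{12}{m}$. None of this is in your proposal: you exhibit no barrier functions, no quartic, no eighth-degree polynomial, and your definition of $\alpha_m$ as ``the largest root of the relevant resultants and discriminants'' silently assumes both that these polynomials exist in the required form and that all $\alpha$ beyond that root are admissible --- precisely the one-positive-root structure that needs the Sturm-chain computation. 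Since the statement to be proved is the existence of an algebraic $\alpha_m>\frac2m$ valid for \emph{all} $\alpha\geq\alpha_m$, this omission is not a technicality; it is the proof (cf.\ lemma \ref{Lemma:Lew:biquadr}).

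Two smaller points. Your appeal to the linearisation cuts no ice by itself: node behaviour is equivalent to $\EuScript{E}_\alpha$-stability, and the paper records that $\mathcal{M}^5_2$ and $\mathcal{M}^1_6$ are stable yet not minimizing, so there is no shortcut from the characteristic polynomial to sufficiency, and your claim that a spiral ``precludes minimality'' is asserted, not proved (it is also not needed, since the theorem claims no optimality of $\alpha_m$; likewise the extra ``Goldschmidt-type'' energy comparison is superfluous once the one-sided sub-calibrations exist, because competitors hugging $\{y=0\}$ are inner deformations of the cone covered by the field on $\{0<\sqrt{m-1}\,y<\sqrt{\alpha}|x|\}$). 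Finally, the passage from the two one-sided fields to full minimality should be organised as in propositions \ref{prop:Lew:BedMin}--\ref{prop:Lew:ganz}: the reconstructed auxiliary function is only piecewise defined across the cone and degenerates on $\{x=0\}\cup\{y=0\}$, so one works with the approximating level sets $E_k$, $D_k$, uses lower semicontinuity, and removes the singular axis by the cut-off argument of proposition \ref{prop:Lew:ganz}; your truncation remark gestures at this but would need to be carried out in that two-sided, sub-minimizer form.
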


\begin{remark}
 For $\alpha$ an integer, our result is equivalent to the area-minimizing property of the corresponding rotated cones in $\R^{m+\alpha+1}$. Indeed, with our lower bounds presented in rem. \ref{rem:bounds} we recover the area-minimizing property of all Lawson's cones, i.e. of the cones
\[
 C_{k,h}\coloneqq\{\,(x,y)\in\R^k\times\R^h\mid(h-1)|x|^2=(k-1)|y|^2\,\}
\]
with \ $k,h\geq2$ and $k+h\geq9$ \ or \ $(k,h)\in\{(3,5),(5,3),(4,4)\}$, \ cf. \cite{BdGG, Lawson,Simoes}, where $k$ and $h$ take over the parts of $m$ and $\alpha+1$. For further reading on area-minimizing cones, see also \cite{Lawlor} and the references contained therein.
\end{remark}

\begin{remark}
Following the minimal surfaces theory we will introduce the terminology of a \emph{local $\alpha$-perimeter minimizer} in the next section. Alternatively, we could say in theorem \ref{Satz:Lew:1} that the hypercone
\[
\mathcal{M}^\alpha_m\coloneqq\partial \mathcal{C}^\alpha_m=\{\sqrt{m-1}\cdot y=\sqrt{\alpha}\cdot |x|\}, \qquad \text{with arbitrary $\alpha\geq \alpha_m$,}
\]
is $\alpha$-minimizing in $\R^m\times\R_{\geq0}$, where the boundary of $\mathcal{C}^\alpha_m$ is seen with respect to the induced topology.
\end{remark}

\begin{remark}
 In our proof, we will specify polynomials $\p$ which characterize the corresponding $\alpha_m$ as the unique positive root. Moreover, we show $\alpha_m<\frac{12}{m}$, thus $\alpha_m\to 0$ with $m\to\infty$.
\end{remark}

\begin{remark}\label{rem:bounds}

First ( integer ) bounds can be found in \cite{Dierkes:erstErg}, namely
\[
\alpha_2 = 11, ~ \alpha_3=6, ~ \alpha_4=\alpha_5=\alpha_6=3, ~ \alpha_7=\dots=\alpha_{11}=2, ~ \alpha_m=1 \text{ for $m\geq12$}.
\]
Shortly thereafter, they were corrected in \cite{Dierkes:verbessErg} to
\[
 \alpha_2=6, \quad \alpha_3=4, \quad \alpha_4=3, \quad \alpha_5=\alpha_6=2, \quad \alpha_m=1 \quad \text{for ~ $m\geq7$}.
\]
Our investigations show, that they can be improved to

\begin{center}
\begin{tabular}{r@{\ $\approx$\ }r@{.}l}
$\alpha_2$ & 5 & 881525129\\
$\alpha_3$ & 3 & 958758640\\
$\alpha_4$ & 2 & 829350458\\
$\alpha_5$ & 1 & 969224627\\
$\alpha_6$ & 1 & 352500103
\end{tabular}\quad
\begin{tabular}{r@{\ $\approx$\ }r@{.}l}
$\alpha_7$ & 0 & 963594772\\
$\alpha_8$ & 0 & 728989161\\
$\alpha_9$ & 0 & 581153278\\
$\alpha_{10}$ & 0 & 481712568\\
$\alpha_{11}$ & 0 & 410855526
\end{tabular}\quad
\begin{tabular}{r@{\ $\approx$\ }r@{.}l}
$\alpha_{12}$ & 0 & 357996307\\
$\alpha_{13}$ & 0 & 317117533\\
\multicolumn{3}{c}{\ldots}\\
$\alpha_{2017}$ & 0 & 001377480\\
\multicolumn{3}{c}{\ldots}
\end{tabular}
\end{center}
\end{remark}

\begin{remark}
For all $m=2,3,\dots$ we have $m+\alpha_m\geq4+\sqrt{8}$, cf. remark \ref{Lew:Bem:Koeff}, so, direct calculations yield that all hypercones $\mathcal{M}^\alpha_m$, with $\alpha\geq \alpha_m$, are (of course) $\EuScript{E}_\alpha$-stable, see also \cite[p. 168]{DHT}.
\end{remark}

\begin{remark}
Although $\mathcal{M}^5_2$ is $\EuScript{E}_5$-stable, the corresponding cone $\mathcal{C}^5_2$ is not a (local) $5$-perimeter minimizer in $\R^2\times\R_{\geq0}$. Similarly, the hypercone  $\mathcal{M}^1_6$ is $\EuScript{E}_1$-stable, but the cone $\mathcal{C}^1_6$ does not minimize the $1$-perimeter in $\R^6\times\R_{\geq0}$, cf. \cite{Dierkes:verbessErg}. Hence, the optimality question of our $\alpha_m$'s still remains open.
\end{remark}

\section{Notations and preliminary results}
Let $\Omega\subseteq \R^m\times\R_{\geq0}$ be open (with respect to the induced topology) and let $\alpha>0$. We say that $f\in BV^\alpha(\Omega)$\ if $f\in L_1(\Omega)$ and the quantity
\[
\int_\Omega y^\alpha|Df|\coloneqq\sup\left\{\int_\Omega f(z)\operatorname{div}(\psi(z))dz : \psi\in C^1_c(\Omega,\R^{m+1}), |\psi(z)|\leq y^\alpha\right\}
\]
is finite. For a Lebesgue measurable set $E\subseteq\R^m\times\R_{\geq0}$ we call
\[
 \Per(E;\Omega)\coloneqq \int_\Omega y^\alpha|D\chi_E|
\]
the \emph{$\alpha$-perimeter of $E$ in $\Omega$}. Furthermore, we call $E$ an \emph{$\alpha$-Caccioppoli set in $\Omega$} if $E$ has a locally finite $\alpha$-perimeter in $\Omega$, i.e. $\chi_E\in BV^\alpha_{loc}(\Omega)$.

\begin{example}
 By the divergence theorem, if $E\subseteq\R^m\times\R_{\geq0}$ is an open set with regular boundary, then
 \[
  \Per(E;\Omega) = \EuScript{E}_{\alpha}(\partial E\cap\Omega)
 \]
for all open sets $\Omega$.
\end{example}

\begin{remark}
 Of course, several properties of the $\alpha$-perimeter can be directly transferred from the known properties of the perimeter, cf. \cite{Giusti,Maggi}.
\end{remark}

\begin{remark}
 Note that there are $\alpha$-Caccioppoli sets which are not Caccioppoli, i.e. do not possess a locally finite perimeter: In an arbitrary neighborhood of the origin consider the set
 \[
  A\coloneqq\bigcup_{n=0}^\infty A_n,
 \]
 where $A_n$ is a triangle with vertices
 \[\textstyle
 \left(\frac{1}{2^{n+1}},0\right),  ~ \left(\frac{1}{2^{n}},0\right) ~ \text{and} ~ \left(\frac{3}{2^{n+2}},\sqrt{\frac{1}{4(n+1)^2}-\frac{1}{2^{2n+4}}}\right).
 \]
 Hereby, the $A_n$ are chosen in such a way that
 \[
 \left|\partial A_n\cap\big(\R\times\R_{>0}\big)\right| = \frac{1}{n+1}.
 \]
 On the other hand, the $\alpha$-perimeter of $A$ is dominated by the convergent series
 \[
  \sum_{n=0}^\infty \frac{1}{(\alpha+1)(n+1)}\left(\frac{1}{4(n+1)^2}-\frac{1}{2^{2n+4}}\right)^{ \Large \nicefrac{\alpha}{2}}.
 \]
\end{remark}

\begin{definition}
 Let $E$ be an $\alpha$-Caccioppoli set in $\Omega$. We say that $E$ is a \emph{local $\alpha$-perimeter minimizer in $\Omega$} if in all bounded open sets $B\subseteq\Omega$ we have
 \[
  \Per(E;B)\leq\Per(F;B) \quad \text{for all $F$ such that $F\bigtriangleup E \subset\subset B$.}
 \]
\end{definition}

\subsection{Under weakened conditions}
The following definitions and results are analogous to the observations in \cite[sec. 1]{dPP}. We only prove one proposition, which was not used in \cite{dPP}.

\begin{definition}
 Let $E$ be an $\alpha$-Caccioppoli set in $\Omega$. We say that $E$ is a \emph{local $\alpha$-perimeter sub-minimizer in $\Omega$} if in all bounded open sets $B\subseteq\Omega$ we have
 \[
  \Per(E;B)\leq\Per(F;B) \quad \text{for all $F\subseteq E$ such that $E\backslash F \subset\subset B$.}
 \]
\end{definition}
The connection with minimizers is given by
\begin{proposition}\label{prop:Lew:BedMin}
  $E$ is a local $\alpha$-perimeter minimizer in $\Omega$ if and only if $E$ as well as $\Omega\backslash E$ is a local $\alpha$-perimeter sub-minimizer in $\Omega$.
\end{proposition}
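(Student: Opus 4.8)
The statement is an "iff", so I would prove the two directions separately. Let me think about what's involved.

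Forward direction ($E$ minimizer $\Rightarrow$ $E$ and $\Omega\setminus E$ sub-minimizers): If $E$ is a local $\alpha$-perimeter minimizer, take any $F\subseteq E$ with $E\setminus F\subset\subset B$. Then $F\triangle E = E\setminus F\subset\subset B$, so the minimizing inequality applies and gives $\Per(E;B)\leq\Per(F;B)$. So $E$ is a sub-minimizer. For $\Omega\setminus E$: note $\Per(E;B)=\Per(\Omega\setminus E;B)$... wait, this needs care — the complement should be taken in $\Omega$, and $\chi_{\Omega\setminus E}=\chi_\Omega-\chi_E$, so $|D\chi_{\Omega\setminus E}|=|D\chi_E|$ as measures on $\Omega$ (the boundary of $\Omega$ doesn't contribute on an open set). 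Hence $\Per(\Omega\setminus E;B)=\Per(E;B)$ for bounded open $B\subseteq\Omega$. Now given $G\subseteq\Omega\setminus E$ with $(\Omega\setminus E)\setminus G\subset\subset B$, set $F:=\Omega\setminus G\supseteq E$. Then $F\triangle E=G^c\triangle E$... let me recompute: $F\triangle E = (F\setminus E)\cup(E\setminus F)$. Since $E\subseteq F$, $E\setminus F=\emptyset$, and $F\setminus E=(\Omega\setminus G)\setminus E=(\Omega\setminus E)\setminus G\subset\subset B$. So the minimizing property gives $\Per(E;B)\leq\Per(F;B)$, i.e. $\Per(\Omega\setminus E;B)=\Per(E;B)\leq\Per(F;B)=\Per(\Omega\setminus F;B)=\Per(G;B)$. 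Wait — $\Omega\setminus F=\Omega\setminus(\Omega\setminus G)=G$ (as subsets of $\Omega$, modulo the fact that $G\subseteq\Omega$). Good, so $\Per(\Omega\setminus E;B)\leq\Per(G;B)$, establishing that $\Omega\setminus E$ is a sub-minimizer.

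Reverse direction ($E$ and $\Omega\setminus E$ sub-minimizers $\Rightarrow$ $E$ minimizer): This is the real content. Given any competitor $F$ with $F\triangle E\subset\subset B$, I want $\Per(E;B)\leq\Per(F;B)$. The natural move is to "sandwich": set $F_1:=F\cap E\subseteq E$ and $F_2:=F\cup E\supseteq E$. Then $E\setminus F_1=E\setminus F\subseteq F\triangle E\subset\subset B$, so the sub-minimizer property of $E$ gives $\Per(E;B)\leq\Per(F\cap E;B)$. Similarly $\Omega\setminus(F\cup E)=(\Omega\setminus E)\setminus F\subseteq\ldots$ hmm, I need $(\Omega\setminus E)\setminus(\Omega\setminus(F\cup E))\subset\subset B$, which equals $(F\cup E)\setminus E=F\setminus E\subseteq F\triangle E\subset\subset B$; and $\Omega\setminus(F\cup E)\subseteq\Omega\setminus E$. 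So the sub-minimizer property of $\Omega\setminus E$ gives $\Per(\Omega\setminus E;B)\leq\Per(\Omega\setminus(F\cup E);B)$, i.e. $\Per(E;B)\leq\Per(F\cup E;B)$. Adding the two inequalities: $2\Per(E;B)\leq\Per(F\cap E;B)+\Per(F\cup E;B)$. Now I invoke the submodularity (strong subadditivity) of the $\alpha$-perimeter: $\Per(F\cap E;B)+\Per(F\cup E;B)\leq\Per(F;B)+\Per(E;B)$, which holds for the weighted perimeter exactly as for the ordinary one since $y^\alpha>0$ is a fixed continuous weight and the usual proof (via the definition as a sup of $\int f\,\operatorname{div}\psi$, or via BV calculus with $\min$ and $\max$ of BV functions) carries over verbatim. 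Combining, $2\Per(E;B)\leq\Per(F;B)+\Per(E;B)$, hence $\Per(E;B)\leq\Per(F;B)$.

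The main obstacle is the submodularity inequality $\Per(E\cap F;B)+\Per(E\cup F;B)\leq\Per(E;B)+\Per(F;B)$; everything else is set-theoretic bookkeeping. I would either cite it from the standard BV references \cite{Giusti,Maggi} (noting the weight $y^\alpha$ changes nothing, as it is bounded and bounded away from zero on any $B\subset\subset\Omega$ with $\overline{B}\subseteq\R^m\times\R_{>0}$ — and for $B$ touching $\{y=0\}$ one approximates or argues directly from the sup-definition), or give the two-line proof: for $\chi_E,\chi_F\in BV^\alpha$, $\chi_{E\cap F}=\min(\chi_E,\chi_F)$ and $\chi_{E\cup F}=\max(\chi_E,\chi_F)$, and for BV functions $\int_B y^\alpha|D\min(u,v)|+\int_B y^\alpha|D\max(u,v)|\leq\int_B y^\alpha|Du|+\int_B y^\alpha|Dv|$. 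I would also remark that one must check $F\cap E$ and $F\cup E$ are genuine $\alpha$-Caccioppoli sets so the inequalities make sense, which follows since $\chi_{F\cap E},\chi_{F\cup E}\in BV^\alpha_{loc}$ whenever $\chi_E,\chi_F$ are.
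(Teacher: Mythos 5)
Your argument is correct, and it is essentially the intended one: the paper itself omits the proof of this proposition, referring to \cite{dPP}, where exactly your scheme is used — the easy forward direction via $\Per(\Omega\backslash E;B)=\Per(E;B)$, and the reverse direction by testing sub-minimality of $E$ with $F\cap E$ and of $\Omega\backslash E$ with $\Omega\backslash(F\cup E)$, then invoking submodularity $\Per(F\cap E;B)+\Per(F\cup E;B)\leq\Per(F;B)+\Per(E;B)$ and cancelling. (For the cancellation one should note $\Per(E;B)<\infty$ whenever some competitor has finite $\alpha$-perimeter, since $E$ and $F$ agree outside a compact subset of $B$ and $E$ is $\alpha$-Caccioppoli; this is the only small point left implicit.)
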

The lower semicontinuity of the $\alpha$-perimeter implies
\begin{proposition}\label{prop:Lew:konv}
  Let $\{E_k\}_{k\in\mathbb{N}}$ and $E$ be $\alpha$-Caccioppoli sets in $\Omega$ with $E_k\subseteq E$ and suppose that $E_k$ locally converge to $E$ in $\Omega$. If all $E_k$'s are local $\alpha$-perimeter sub-minimizers in $\Omega$, then  $E$ is a local $\alpha$-perimeter sub-minimizer in $\Omega$ as well.
\end{proposition}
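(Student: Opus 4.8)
The plan is to exploit the two defining inequalities for sub-minimizers: the monotonicity $E_k \subseteq E$ (which lets us use the $E_k$ themselves as competitors "from above") together with the lower semicontinuity of the $\alpha$-perimeter with respect to local $L^1$-convergence. First I would fix a bounded open set $B \subseteq \Omega$ and a competitor $F \subseteq E$ with $E \setminus F \subset\subset B$. I want to show $\Per(E;B) \le \Per(F;B)$. The natural idea is to pass to the limit in the sub-minimality inequalities for the $E_k$'s, so the task reduces to producing, for each $k$, an admissible competitor $F_k \subseteq E_k$ with $E_k \setminus F_k \subset\subset B$ whose $\alpha$-perimeter converges (or at least does not increase in the limit) to $\Per(F;B)$.

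The correct choice is $F_k \coloneqq F \cap E_k$. Then $F_k \subseteq E_k$ automatically, and since $F \subseteq E$ and $E_k \subseteq E$ we have $E_k \setminus F_k = E_k \setminus F \subseteq E \setminus F \subset\subset B$, so $F_k$ is admissible for the sub-minimality of $E_k$ in $B$. Hence $\Per(E_k;B) \le \Per(F_k;B)$ for every $k$. Now I would estimate $\Per(F_k;B) = \Per(F \cap E_k;B)$ from above. Writing $F \cap E_k = F \setminus (E \setminus E_k)$ and using that $E_k \to E$ locally in $\Omega$ means $|(E \setminus E_k) \cap B'| \to 0$ for every $B' \subset\subset \Omega$, the symmetric difference $F_k \bigtriangleup F = F \cap (E\setminus E_k)$ has measure tending to $0$ inside $B$; that is, $F_k \to F$ locally in $\Omega$. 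Combining this with the submodularity / locality of the $\alpha$-perimeter, one gets $\limsup_{k} \Per(F_k;B) \le \Per(F;B)$: this is the one genuinely technical point, and I would handle it by the standard cut-and-paste inequality $\Per(F \cap E_k; B) + \Per(F \cup E_k; B) \le \Per(F;B) + \Per(E_k;B)$ applied on a slightly larger ball, together with the fact that the "extra'' boundary introduced by intersecting with $E_k$ lives in the region where $E_k$ differs from $E$, whose measure vanishes, plus lower semicontinuity to absorb the $\Per(E_k;B)$ term. (Alternatively, since only the sets $F_k$ near $\partial F$ matter and $F_k = F$ outside a vanishing-measure set, one argues directly that the defining supremum for $\Per(F_k;B)$ is asymptotically bounded by that for $\Per(F;B)$.)

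Finally I would put the pieces together. By lower semicontinuity of $f \mapsto \int_B y^\alpha |Df|$ under local $L^1$-convergence (which is exactly the property quoted just before the statement, $\chi_{E_k} \to \chi_E$ in $L^1_{loc}$), we have
\[
 \Per(E;B) \le \liminf_{k\to\infty} \Per(E_k;B) \le \limsup_{k\to\infty} \Per(F_k;B) \le \Per(F;B).
\]
Since $B \subseteq \Omega$ bounded open and $F \subseteq E$ with $E \setminus F \subset\subset B$ were arbitrary, this is precisely the definition of $E$ being a local $\alpha$-perimeter sub-minimizer in $\Omega$, which completes the proof.

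The main obstacle is the middle step, $\limsup_k \Per(F\cap E_k;B) \le \Per(F;B)$: one must be careful that intersecting the competitor $F$ with $E_k$ does not create new interface of non-negligible $\alpha$-perimeter. This is where the hypothesis $E_k \subseteq E$ is essential — it guarantees $F \cap E_k$ and $F$ differ only by a piece of $E \setminus E_k$, whose Lebesgue measure (hence, after the cut-and-paste inequality and lower semicontinuity, whose contribution to the perimeter) goes to zero.
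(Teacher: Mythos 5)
Your setup---the competitor $F_k\coloneqq F\cap E_k$, its admissibility for the sub-minimality of $E_k$, submodularity, and lower semicontinuity---contains all the right ingredients, but the assembly has a genuine gap at precisely the step you flag: the claim $\limsup_k\Per(F\cap E_k;B)\le\Per(F;B)$ is not established by what you invoke, and cannot be established that way. That $|(F\cap E_k)\bigtriangleup F|\to0$ gives no upper control on perimeter at all: the $\alpha$-perimeter is only \emph{lower} semicontinuous under local $L^1$-convergence, and intersecting $F$ with $E_k$ may a priori import a large portion of $\partial E_k$ lying inside $F$. The proposed repair via the cut-and-paste inequality does not close this either: from $\Per(F\cap E_k;B)+\Per(F\cup E_k;B)\le\Per(F;B)+\Per(E_k;B)$ you would need $\Per(E_k;B)-\Per(F\cup E_k;B)\le o(1)$, and lower semicontinuity cannot ``absorb'' $\Per(E_k;B)$, since it only bounds limits from below and gives no comparison of these two terms at fixed $k$. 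Your alternative sketch (comparing the defining suprema because $F_k=F$ off a set of vanishing measure) fails for the same reason: for each fixed test field the integrals converge, which again yields only the lower bound $\liminf_k\Per(F_k;B)\ge\Per(F;B)$, the opposite of what you need.

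The fix is a small rearrangement of your own ingredients, and it is the argument the paper has in mind (the one of de Philippis--Paolini, to which this section refers): if $\Per(F;B)=\infty$ there is nothing to prove; otherwise combine submodularity with the sub-minimality inequality $\Per(E_k;B)\le\Per(F\cap E_k;B)$ and cancel the finite term $\Per(F\cap E_k;B)$ to obtain, for every $k$, the estimate $\Per(F\cup E_k;B)\le\Per(F;B)$. Since $F\subseteq E$ and $E_k\subseteq E$ with $E_k\to E$ locally, the sets $F\cup E_k$ converge locally to $E$, so lower semicontinuity applied to the sequence $F\cup E_k$---not to $E_k$ or to $F\cap E_k$---gives $\Per(E;B)\le\liminf_k\Per(F\cup E_k;B)\le\Per(F;B)$. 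In short, upper semicontinuity along the intersections is neither available nor needed; it is the unions $F\cup E_k$ that carry the inequality to the limit.
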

Furthermore, the existence of a so called sub-calibration ensures the sub-minimality.

\begin{definition}
  Let $E\subseteq\Omega$ be an $\alpha$-Caccioppoli set in $\Omega$ with $\partial E\cap \Omega\in C^2$. We call a vector field $\xi\in C^1(\Omega,\R^{m+1})$ an \emph{$\alpha$-sub-calibration of $E$ in $\Omega$} if it  fulfills
 \begin{enumerate}[\qquad (i)]
  \item $|\xi(z)|\leq y^\alpha$\neghphantom{$|\xi(z)|\leq y^\alpha$}\hphantom{ $\xi(z)=y^\alpha\cdot \nu_E(z)$} ~ for all $z\in \Omega$,
  \item $\xi(z)=y^\alpha\cdot \nu_E(z)$ ~ for all $z\in\partial E\cap \Omega$,
  \item $\operatorname{div} \xi(z)\leq 0$\neghphantom{$\operatorname{div} \xi(z)\leq 0$}\hphantom{$\xi(z)=y^\alpha\cdot \nu_E(z)$}~ for all $z\in\Omega$,
 \end{enumerate}
 where $\nu_E$ denotes the exterior unit normal vector field on $\partial E$.\footnote{Note that, in contrast to \cite{Davini}, our vector field has been weighted.}
\end{definition}
\begin{proposition}\label{prop:Lew:subKal}
 If $\xi$ is an $\alpha$-sub-calibration of $E$ in an open set $\EuScript{O}\subseteq\Omega$, then $E$ is a local $\alpha$-perimeter sub-minimizer in all $\Omega$.
\end{proposition}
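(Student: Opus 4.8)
The plan is to run the classical calibration argument via the divergence theorem, now weighted by $y^\alpha$ and adapted to the one-sided comparison class that defines sub-minimality. I would fix a bounded open set $B\subseteq\Omega$ and a competitor $F\subseteq E$ with $W:=E\setminus F\subset\subset B$, the goal being $\Per(E;B)\leq\Per(F;B)$; one may clearly assume $\Per(F;B)<\infty$, so that $D\chi_F$ is a Radon measure on $B$. I would first settle the case in which the active region already sits inside $\EuScript{O}$ — say $\overline W\subseteq\EuScript{O}$ and $\partial E\cap B\subseteq\EuScript{O}$, which in particular covers every $B\subseteq\EuScript{O}$ — and postpone to the end the reduction of a general $B\subseteq\Omega$ to this case, which I expect to be the only delicate point.

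In that case $\xi\in C^1(\EuScript{O},\R^{m+1})$ is defined near $\overline W$ and near $\partial E\cap B$, and $|\xi|\leq y^\alpha$ is bounded on the bounded set $B$. Since $\partial E\cap\Omega$ is $C^2$, the reduced boundary of $E$ is all of $\partial E$, the perimeter measure is carried by $\partial E$, and the Gauss--Green normal of $\chi_E$ is $-\nu_E$; so property (ii) gives
\[
\int_B\xi\cdot dD\chi_E \;=\; -\int_{\partial E\cap B}\xi\cdot\nu_E\,d\mathscr H^m \;=\; -\int_{\partial E\cap B}y^\alpha\,d\mathscr H^m \;=\; -\Per(E;B).
\]
For $F$, Cauchy--Schwarz and property (i) give $\xi\cdot\nu_F\leq|\xi|\leq y^\alpha$ $\,|D\chi_F|$-almost everywhere, hence
\[
\int_B\xi\cdot dD\chi_F \;=\; -\int_{\partial^*F\cap B}\xi\cdot\nu_F\,d\mathscr H^m \;\geq\; -\int_{\partial^*F\cap B}y^\alpha\,d\mathscr H^m \;=\; -\Per(F;B).
\]
Finally, since $F\subseteq E$ one has $\chi_E-\chi_F=\chi_W\geq0$ with $\overline W\subset\subset B$, so an integration by parts produces no boundary term and property (iii) yields
\[
\int_B\xi\cdot dD\chi_E-\int_B\xi\cdot dD\chi_F \;=\; \int_B\xi\cdot dD\chi_W \;=\; -\int_W\operatorname{div}\xi\,dz \;\geq\; 0 .
\]
Chaining the three displays, $-\Per(E;B)=\int_B\xi\cdot dD\chi_E\geq\int_B\xi\cdot dD\chi_F\geq-\Per(F;B)$, i.e. $\Per(E;B)\leq\Per(F;B)$; thus $E$ is a local $\alpha$-perimeter sub-minimizer in $\EuScript{O}$.

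To pass from $\EuScript{O}$ to $\Omega$ I would argue that modifying $E$ on $\Omega\setminus\EuScript{O}$ costs nothing for $\Per$: in the situation of interest $\EuScript{O}=\Omega\cap\{y>0\}$, and since $\alpha>0$ the weight $y^\alpha$ vanishes on $\{y=0\}$, while the bound $|\xi|\leq y^\alpha$ lets $\xi$ extend continuously by $0$ across $\{y=0\}$; hence a general competitor $F\subseteq E$ with $E\setminus F\subset\subset B\subseteq\Omega$ may be reduced, up to a $\Per$- and Lebesgue-null set, to one with $E\setminus F\subseteq\EuScript{O}$, and the integration by parts above then contributes no extra boundary term along $\{y=0\}$; the same works when $\EuScript{O}=\Omega\setminus\{\text{vertex}\}$, where $\partial E\cap(\Omega\setminus\EuScript{O})$ is $\mathscr H^m$-negligible. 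Everything else is soft — the calibration inequality uses no regularity of $\partial F$ beyond $|D\chi_F|$ being a finite Radon measure — and the real content of the paper lies not in this proposition but in the later construction, for the hypercones $\mathcal{C}^\alpha_m$, of a vector field $\xi$ satisfying (i)--(iii).
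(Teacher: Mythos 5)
Your ``case 1'' is the correct core of the argument, and it is essentially the standard sub-calibration computation of de Philippis--Paolini that the paper itself does not reproduce (it cites \cite{dPP}; the only proposition proved in that section is proposition \ref{prop:Lew:ganz}): pairing $\xi$ with $D\chi_E$, $D\chi_F$ and $D\chi_{E\setminus F}$ is just a repackaging of applying the divergence theorem on $W=E\setminus F$ and splitting $\partial^*W$ into its part on $\partial E$, where (ii) gives equality, and its part on $\partial^*F$, where (i) gives the inequality. Two small points of care there: finiteness of $\Per(F;B)$ does \emph{not} make $D\chi_F$ a Radon measure on $B$ (the paper's own remark exhibits an $\alpha$-Caccioppoli set which is not Caccioppoli, because $y^\alpha$ degenerates at $\{y=0\}$), so you should either read $\int_B\xi\cdot dD\chi_F$ as $-\int_{\partial^*F\cap B}\xi\cdot\nu_F\,d\mathscr{H}^m$, which is finite since $|\xi\cdot\nu_F|\leq y^\alpha$, or localize everything to a neighbourhood of $\overline W$ compactly contained in $\EuScript{O}$; and you need $\partial^*F\cap B\subseteq(\partial E\cup\overline W)\cap B\subseteq\EuScript{O}$ for the pairing with $\xi$ to be meaningful, which your case-1 hypotheses do provide.

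The genuine gap is the final reduction, which you yourself flag as the delicate point. You assume that $\Omega\setminus\EuScript{O}$ is a thin set such as $\{y=0\}$ or a vertex and argue that modifications there are free because the weight vanishes; but in the actual application of the proposition one has $\Omega=\R^m\times\R_{>0}\backslash\{x=0\}$ and $\EuScript{O}=\{0<\sqrt{m-1}\,y<\sqrt{\alpha}\,|x|\}$, so $\Omega\setminus\EuScript{O}$ is a large open region (essentially the exterior of the cone) on which $y^\alpha$ certainly does not vanish and on which $\operatorname{div}\xi_+>0$; hence the claim that a general competitor can be replaced, up to null sets, by one with $E\setminus F\subseteq\EuScript{O}$ is false, and this route collapses. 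The statement is to be read with the implicit hypothesis, made explicit in the sentence following the proposition, that $\EuScript{O}$ contains $E$; with that reading no reduction is needed at all: since only inner deformations $F\subseteq E$ are admissible, $W=E\setminus F\subseteq E$, so $\overline W\subseteq\overline E\cap\Omega\subseteq\EuScript{O}$ (the sets $E_k$ are closed in $\Omega$) and $\partial E\cap B\subseteq\partial E\cap\Omega\subseteq\EuScript{O}$ hold automatically, i.e.\ your case 1 already \emph{is} the general case. The treatment of $\{y=0\}$ and $\{x=0\}$, at which your last paragraph gestures, is not part of this proposition; it is carried out separately in proposition \ref{prop:Lew:ganz} by explicit truncation, with the error terms $\varepsilon^{m+\alpha}+\varepsilon^\alpha+\varepsilon^{m-1}$.
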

Note that it suffices to find a sub-calibration on a subset of $\Omega$ which contains $E$ since we only deal with inner deformations. Finally, we add
\begin{proposition}\label{prop:Lew:ganz}
 If the cone $\mathcal{C}^\alpha_m$ is a local $\alpha$-perimeter sub-minimizer in $\R^m\times\R_{>0}\backslash\{x=0\}$, then $\mathcal{C}^\alpha_m$ is also a local $\alpha$-perimeter sub-minimizer in the whole  $\R^m\times\R_{\geq0}$.
\end{proposition}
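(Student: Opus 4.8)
\emph{Proof plan.} Abbreviate $\Omega'\coloneqq\R^m\times\R_{>0}\setminus\{x=0\}$ and $c\coloneqq\sqrt{\frac{\alpha}{m-1}}$, so that $\mathcal{C}^\alpha_m=\{0\le y\le c\,|x|\}$. Fix a bounded open set $B\subseteq\R^m\times\R_{\ge0}$ and a competitor $F\subseteq\mathcal{C}^\alpha_m$ with $\mathcal{C}^\alpha_m\setminus F\subset\subset B$ and $\Per(F;B)<\infty$; we must show $\Per(\mathcal{C}^\alpha_m;B)\le\Per(F;B)$. The plan is to modify $F$ near the degeneracy locus $\{y=0\}$ so that the hypothesis on $\Omega'$ applies, and then to undo the modification in a limit. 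For $\delta>0$ set
\[
 G_\delta\coloneqq\bigl(F\cap\{y>\delta\}\bigr)\cup\bigl(\mathcal{C}^\alpha_m\cap\{y\le\delta\}\bigr)\ \subseteq\ \mathcal{C}^\alpha_m ,
\]
so that $G_\delta$ coincides with $F$ on the open set $\{y>\delta\}$ and with $\mathcal{C}^\alpha_m$ on $\{y<\delta\}$. A point of $\mathcal{C}^\alpha_m\setminus G_\delta$ necessarily satisfies $y>\delta$ (otherwise it lies in $\mathcal{C}^\alpha_m\cap\{y\le\delta\}\subseteq G_\delta$) and therefore cannot lie in $F$; moreover, on $\mathcal{C}^\alpha_m$ the inequality $y>\delta$ forces $|x|\ge\delta/c$. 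Hence $\overline{\mathcal{C}^\alpha_m\setminus G_\delta}$ is a compact subset of $B\cap\{y\ge\delta\}\cap\{|x|\ge\delta/c\}$, which lies in the bounded open set $B'\coloneqq B\cap\Omega'\subseteq\Omega'$. Consequently $G_\delta$ is an admissible competitor for $\mathcal{C}^\alpha_m$ in $B'$, and the hypothesis yields $\Per(\mathcal{C}^\alpha_m;B')\le\Per(G_\delta;B')$.

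To pass from $B'$ to $B$, note that the remainder $B\setminus B'=B\cap\bigl(\{y=0\}\cup\{x=0\}\bigr)$ is null both for the weighted perimeter measure $y^\alpha|D\chi_{\mathcal{C}^\alpha_m}|$ and for $y^\alpha|D\chi_{G_\delta}|$: on $\{y=0\}$ the weight $y^\alpha$ vanishes; along the punctured axis $\{x=0,\ y>0\}$ the cone $\mathcal{C}^\alpha_m$ — and hence $F$ and $G_\delta$ too — is \emph{locally empty} (for $y_0>0$ the point $(0,y_0)$ has positive distance to the closed set $\mathcal{C}^\alpha_m$), so no distributional boundary is present there; and $\{0\}$ is a single point, hence $\mathscr{H}^m$-null. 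Since $B'$ and $B\setminus B'$ are disjoint Borel sets with union $B$, it follows that
\[
 \Per(\mathcal{C}^\alpha_m;B)=\Per(\mathcal{C}^\alpha_m;B')\le\Per(G_\delta;B')=\Per(G_\delta;B)\qquad\text{for every }\delta>0 .
\]

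It remains to let $\delta\to0$. Enclose $B$ in a ball $B_R$. Since $\chi_{G_\delta}=\chi_F$ on $\{y>\delta\}$, $\chi_{G_\delta}=\chi_{\mathcal{C}^\alpha_m}$ on $\{y<\delta\}$, and $F\subseteq\mathcal{C}^\alpha_m$, the cut-and-paste formula for sets of finite perimeter gives
\[
 \Per(G_\delta;B)\ \le\ \Per(F;B)\ +\ \Per\!\bigl(\mathcal{C}^\alpha_m;B\cap\{y<\delta\}\bigr)\ +\ \delta^\alpha\,\mathscr{H}^m\!\bigl(\{y=\delta\}\cap\mathcal{C}^\alpha_m\cap B_R\bigr) .
\]
Here $\partial\mathcal{C}^\alpha_m\cap B_R$ is a truncated cone and $\{y=\delta\}\cap\mathcal{C}^\alpha_m\cap B_R$ a bounded piece of a hyperplane, so both have $\mathscr{H}^m$-measure at most some constant $C=C(R,m,\alpha)$, independent of $\delta$; and on $\{y\le\delta\}$ the weight is at most $\delta^\alpha$. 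Therefore the last two summands are $\le C\,\delta^\alpha$, whence $\Per(\mathcal{C}^\alpha_m;B)\le\Per(F;B)+C\,\delta^\alpha$ for all $\delta>0$; letting $\delta\to0$ gives the claim. (One should also record at the outset that $\chi_{\mathcal{C}^\alpha_m}\in BV^\alpha_{loc}(\R^m\times\R_{\ge0})$, which is immediate since $\partial\mathcal{C}^\alpha_m$ is a Lipschitz cone and $y^\alpha$ is locally bounded.)

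The only genuine content here is the negligibility assertion of the second paragraph: one must make fully rigorous that the degeneracy $y^\alpha\to0$ along $\{y=0\}$, together with the local triviality of the cone along the punctured axis $\{x=0,\ y>0\}$, leaves no weighted boundary mass on the removed set $\{x=0\}\cup\{y=0\}$ — and does so uniformly enough to survive the limit $\delta\to0$. I expect this to be the main, and essentially the only non-routine, step; the remaining ingredients are the standard $BV$ cut-and-paste and the elementary estimates on conical geometry used above.
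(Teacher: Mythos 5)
Your argument is correct and follows essentially the same route as the paper: you modify the competitor so that it coincides with the cone on the strip $\{y\le\delta\}$ (the paper instead takes $\widetilde F_\varepsilon=\widetilde F\cup\big(\mathcal{C}^\alpha_m\cap\{|x|<\varepsilon\,\vee\,y<\varepsilon\}\big)$, which, since $F\subseteq\mathcal{C}^\alpha_m$ and $\mathcal{C}^\alpha_m\cap\{|x|<\varepsilon\}\subseteq\{y<c\varepsilon\}$, is the same modification), observes that the modified set is an admissible competitor in the punctured domain, and lets the truncation parameter tend to $0$, the extra weighted perimeter being $O(\delta^\alpha)$ exactly as in the paper's bound $c_1\{\varepsilon^{m+\alpha}+\varepsilon^\alpha+\varepsilon^{m-1}\}$. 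Your explicit justification that the weighted perimeter measure puts no mass on $\{y=0\}\cup\{x=0\}$ is a detail the paper leaves implicit, but it is the same proof.
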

\begin{proof}
 Firstly, we have for a bounded open set $\widetilde{B}\subset\R^m\times\R_{\geq0}$:
 \[
  \Per(\mathcal{C}^\alpha_m;\widetilde{B})\leq \Per(F;\widetilde{B})
 \]
for all $F\subseteq\mathcal{C}^\alpha_m$ such that $\mathcal{C}^\alpha_m\backslash F \ \subset\subset \  \widetilde{B}\backslash\{\,x=0\,\vee\,y=0\,\}$. Let now be $\widetilde{F}\subseteq \mathcal{C}^\alpha_m$ with $\mathcal{C}^\alpha_m\backslash\widetilde{F}\subset\subset\widetilde{B}$. For $\varepsilon>0$ we consider the set
\[
  \widetilde{F}_\varepsilon\coloneqq \widetilde{F}\cup \big( \mathcal{C}^\alpha_m \cap \{\,|x|<\varepsilon\,\ \vee\,y<\varepsilon\,\} \big).
\]
Hence,
\[
  \mathcal{C}^\alpha_m\backslash\widetilde{F}_\varepsilon \ \subset\subset \ \widetilde{B}\backslash\{\,x=0\,\vee\,y=0\,\},
\]
thus with the preliminary observation we have
\begin{align*}
 \Per(\mathcal{C}^\alpha_m;\widetilde{B})&\leq\Per(\widetilde{F}_\varepsilon;\widetilde{B}) \\
 &\leq \Per(\widetilde{F};\widetilde{B}) + c_1(m,\alpha,\widetilde{B})\cdot\{\varepsilon^{m+\alpha}+\varepsilon^\alpha+\varepsilon^{m-1}\} \\ &\xrightarrow{\varepsilon\searrow0}\Per(\widetilde{F};\widetilde{B}).
\end{align*}
\end{proof}
\section{First proof of theorem \ref{Satz:Lew:1}}
Arguing in this section as in \cite{dPP} we give a first proof of theorem \ref{Satz:Lew:1}. Unfortunately, this does not lead to our best bounds, but gives the $\alpha_m$'s as constructible numbers. This study is based on the analysis of the cubic polynomial
\[
 \mathbf{Q}_{m,\alpha}(t)\coloneqq (m-1)^4t^3-3(m-1)^2\alpha t^2-3(m-1)\alpha^2t+\alpha^4.
\]
\begin{lemma}\label{Lemma:Lew:Q}
For all $\displaystyle\alpha\geq\frac{2m^{\nicefrac{3}{2}}+3m-1}{(m-1)^2}$, we have
 \[ \mathbf{Q}_{m,\alpha}(t)\geq0 \quad \text{for all $t\geq0$.}\]
\end{lemma}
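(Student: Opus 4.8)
The plan is to reduce the statement to an elementary minimization in one variable. Since $\alpha>0$ and $m\geq 2$, I would first substitute $t=\frac{\alpha}{m-1}\,v$ with $v\geq0$; after expanding, every power of $m-1$ cancels against the denominator and one obtains
\[
 \mathbf{Q}_{m,\alpha}\!\left(\tfrac{\alpha}{m-1}\,v\right)=\alpha^3\bigl((m-1)v^3-3v^2-3v+\alpha\bigr).
\]
As $\alpha^3>0$, the asserted inequality $\mathbf{Q}_{m,\alpha}(t)\geq0$ for all $t\geq0$ is therefore equivalent to
\[
 h(v)\coloneqq(m-1)v^3-3v^2-3v+\alpha\geq0\qquad\text{for all }v\geq0,
\]
that is, to $\alpha\geq\max_{v\geq0}\varphi(v)$ with $\varphi(v)\coloneqq 3v^2+3v-(m-1)v^3$.

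Next I would locate the minimum of $h$ on $[0,\infty)$. Since the leading coefficient $m-1$ is positive, $h(v)\to+\infty$ as $v\to\infty$, while $h(0)=\alpha>0$; moreover $h'(v)=3\bigl((m-1)v^2-2v-1\bigr)$ has exactly one nonnegative zero, namely $v^\ast=\frac{1+\sqrt{m}}{m-1}$, with $h'<0$ on $[0,v^\ast)$ and $h'>0$ on $(v^\ast,\infty)$. Hence $h$ decreases and then increases on $[0,\infty)$, so its minimum there equals $h(v^\ast)$, and it only remains to check that $h(v^\ast)\geq0$ under the hypothesis.

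For a clean evaluation I would first simplify $v^\ast=\frac{1+\sqrt{m}}{(\sqrt{m}-1)(\sqrt{m}+1)}=\frac{1}{\sqrt{m}-1}$, and then use the critical-point relation $(m-1)(v^\ast)^2=2v^\ast+1$ to reduce the cubic term, $(m-1)(v^\ast)^3=v^\ast(2v^\ast+1)=2(v^\ast)^2+v^\ast$. This yields
\[
 h(v^\ast)=\alpha-\bigl(3(v^\ast)^2+3v^\ast-(m-1)(v^\ast)^3\bigr)=\alpha-\bigl((v^\ast)^2+2v^\ast\bigr)=\alpha-\frac{2\sqrt{m}-1}{(\sqrt{m}-1)^2}.
\]
Finally, the identity $(2\sqrt{m}-1)(\sqrt{m}+1)^2=2m^{3/2}+3m-1$ together with $(m-1)^2=(\sqrt{m}-1)^2(\sqrt{m}+1)^2$ gives $\frac{2\sqrt{m}-1}{(\sqrt{m}-1)^2}=\frac{2m^{3/2}+3m-1}{(m-1)^2}$, so that $h(v^\ast)\geq0$ is precisely the assumed bound on $\alpha$. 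Therefore $h\geq0$ on $[0,\infty)$ and the lemma follows.

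The argument poses no conceptual obstacle: it is one-variable calculus together with two algebraic simplifications (that $v^\ast=1/(\sqrt{m}-1)$ and the final identity). The only points requiring a little care are verifying that the relevant critical point is the unique nonnegative root of $h'$ and that it is genuinely a minimum on the half-line, both of which follow from $m-1>0$. One could equally avoid the substitution and work directly with $\mathbf{Q}_{m,\alpha}$, whose unique positive critical point is $t^\ast=\frac{\alpha}{(m-1)(\sqrt{m}-1)}$, at the price of slightly heavier bookkeeping.
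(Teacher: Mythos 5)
Your proposal is correct: the substitution $t=\frac{\alpha}{m-1}v$ does reduce $\mathbf{Q}_{m,\alpha}$ to $\alpha^3 h(v)$ with $h(v)=(m-1)v^3-3v^2-3v+\alpha$, the unique nonnegative critical point is indeed $v^\ast=\frac{1+\sqrt m}{m-1}=\frac{1}{\sqrt m-1}$, and the evaluation $h(v^\ast)=\alpha-\frac{2m^{3/2}+3m-1}{(m-1)^2}$ checks out, so the stated bound on $\alpha$ gives $h\geq0$ on $[0,\infty)$ and hence the lemma. However, your route is genuinely different from the paper's. The paper never locates the minimum: it applies Descartes' rule of signs to $\mathbf{Q}_{m,\alpha}(-t)$ to see there is exactly one negative root, and then controls the number of positive roots via the cubic discriminant $\vartheta$, observing that $-\vartheta$ has the sign of the quadratic $\mathbf{q}_m(\alpha)=(m-1)^2\alpha^2-(6m-2)\alpha+1-4m$, whose sole positive root is exactly $\frac{2m^{3/2}+3m-1}{(m-1)^2}$; for $\alpha$ at or beyond this value the cubic has no positive real roots, and since $\mathbf{Q}_{m,\alpha}(0)=\alpha^4>0$ it stays nonnegative on $[0,\infty)$. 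Your rescaling-plus-calculus argument is more elementary and self-contained (no discriminant formula needed), it makes the threshold transparent as $\max_{v\geq0}\bigl(3v^2+3v-(m-1)v^3\bigr)$, and it shows as a byproduct that the bound is sharp for this lemma, since $h(v^\ast)<0$ for smaller $\alpha$. The paper's discriminant-and-sign-rule approach buys consistency with the rest of the article: the same root-counting machinery (Descartes, resolvents, discriminants, Sturm) is reused for the quartic $P_{m,\alpha}$ in the second proof, so the authors keep one uniform toolkit. Either proof is acceptable; yours could even replace the paper's without loss.
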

\begin{proof}
 For all admissible $m\in\{\,2,3,\ldots\,\}$ and $\alpha>0$, the polynomial $\mathbf{Q}_{m,\alpha}(-t)$  has one sign change in the sequence of its coefficients
 \[-(m-1)^4, \quad -3(m-1)^2\alpha, \quad 3(m-1)\alpha^2,\quad \alpha^4.\]
 Hence, due to Descartes' rule of signs, $\mathbf{Q}_{m,\alpha}$ always has one negative root. On the other hand, $\mathbf{Q}_{m,\alpha}$ has none, a double or two distinct positive roots.

The number of real roots of the cubic polynomial $\mathbf{Q}_{m,\alpha}$ is determined by its discriminant
\[
 \vartheta=-27(m-1)^6\alpha^6\cdot\{(m-1)^2\alpha^2-(6m-2)\alpha+1-4m\}.
\]
Summarizing, we have:
\begin{enumerate}[i)]
 \item If $\vartheta>0$, then $\mathbf{Q}_{m,\alpha}$ has one negative and two distinct positive roots.
 \item If $\vartheta=0$, then $\mathbf{Q}_{m,\alpha}$ has one negative and a double positive root.
 \item If $\vartheta<0$, then $\mathbf{Q}_{m,\alpha}$ has  only one negative root.
\end{enumerate}
The statement of the lemma then follows since $-\vartheta$ has the same sign as the quadratic polynomial
\[
 \mathbf{q}_m(\alpha)\coloneqq(m-1)^2\alpha^2-(6m-2)\alpha+1-4m,
\]
whose sole positive root is
\[\alpha=\frac{2m^{\nicefrac{3}{2}}+3m-1}{(m-1)^2}.\qedhere\]
\end{proof}
\begin{proof}[Proof of theorem \ref{Satz:Lew:1} (with concrete bounds)]
 We consider over $\R^m\times\R_{\geq0}$ the function
 \[
  F_{m,\alpha}(z)\coloneqq \frac14\left\{\alpha^2|x|^4-(m-1)^2 y^4\right\}.
 \]
It is \[ \nabla F_{m,\alpha}(z)= \big(\alpha^2|x|^2x \ \ , \ -(m-1)^2 y^3 \big).\]
Moreover, on $\{\,|\nabla F_{m,\alpha}|\neq0\,\}$ we have:
\begin{align*}
 \nabla_{x}\ \frac{|x|^2}{|\nabla F_{m,\alpha}|}&=\frac{2x}{|\nabla F_{m,\alpha}|}- \frac{3\alpha^4|x|^6x}{|\nabla F_{m,\alpha}|^3} \\
 \shortintertext{and}
 \frac{\partial}{\partial y}\ \frac{1}{|\nabla F_{m,\alpha}|}&=-\frac{3(m-1)^4 y^5}{|\nabla F_{m,\alpha}|^3}.
\end{align*}
Hence, \begin{align*}
 \operatorname{div}\left(-y^\alpha\vphantom{\frac{1}{1}}\frac{\nabla F_{m,\alpha}}{|\nabla F_{m,\alpha}|}\right)& = -\skalarProd{\nabla_{x} \ }{\ \frac{y^\alpha\alpha^2|x|^2x}{|\nabla F_{m,\alpha}|}}+\frac{\partial}{\partial y}\ \frac{(m-1)^2y^{\alpha+3}}{|\nabla F_{m,\alpha}|} \\[2ex]
 &\hspace{-5.5em}=-m\frac{y^\alpha\alpha^2|x|^2}{|\nabla F_{m,\alpha}|}- \skalarProd{x}{ \nabla_{x}\ \frac{y^\alpha\alpha^2|x|^2}{|\nabla F_{m,\alpha}|}}\\[0.5ex]
 &\hspace{-5.5em} \hspace*{1em}+\frac{(m-1)^2(\alpha+3)y^{\alpha+2}}{|\nabla F_{m,\alpha}|}  + (m-1)^2y^{\alpha+3}\frac{\partial}{\partial y}\ \frac{1}{|\nabla F_{m,\alpha}|}\\[2ex]
 &\hspace{-5.5em}= |\nabla F_{m,\alpha}|^{-3}\{-(m-1)\alpha^6y^\alpha|x|^8 - (m-1)^4(m+2)\alpha^2y^{\alpha+6} |x|^2 \\[0.5ex]
 &\hspace*{2em} + (m-1)^2(\alpha+3)\alpha^4y^{\alpha+2}|x|^6+(m-1)^6 \alpha y^{\alpha+8} \} \\[2ex]
 &\hspace{-5.5em}= - |\nabla F_{m,\alpha}|^{-3}(m-1)\alpha y^\alpha|x|^{6}\,\mathbf{Q}_{m,\alpha}\left(\frac{y^2}{|x|^2}\right)\{\alpha|x|^2-(m-1)y^2\}.
\end{align*}
For $k\in\mathbb{N}$ consider the sets
\[
 E_k\coloneqq\left\{\, z\in\R^m\times\R_{\geq0}: F_{m,\alpha}(z)\geq\frac1k  \,\right\}\subset\mathcal{C}^\alpha_m.
\]
They all are $\alpha$-Caccioppoli sets in $\R^m\times\R_{>0}\backslash\{x=0\}$ since
$$F_{m,\alpha}\in C^2(\big(\R^m\times\R_{>0}\backslash\{\,z=0\,\}\big)\backslash\mathcal{M}^\alpha_m),$$ whereby $\mathcal{M}^\alpha_m=\partial\mathcal{C}^\alpha_m=\{\,F_{m,\alpha}=0\,\}$. Furthermore, the $E_k$'s locally converge to $\mathcal{C}^\alpha_m=\{\, F_{m,\alpha}\geq0 \,\}$.

With lemma \ref{Lemma:Lew:Q} we have
\[
 \mathbf{Q}_{m,\alpha}\left(\frac{y^2}{|x|^2}\right)\geq0 \quad  \text{for all \ $x\neq0$, \ $y\geq0$}, \text{ and for all } \alpha\geq\frac{2m^{\nicefrac32}+3m-1}{(m-1)^2}
\]
consequently, due to the above computation of the divergence, the vector filed
\[
 \xi_{+}(z)\coloneqq -y^\alpha\frac{\nabla F_{m,\alpha}(z)}{|\nabla F_{m,\alpha}(z)|}
\]
is an $\alpha$-sub-calibration for each $E_k$ in $\{0<\sqrt{m-1}y<\sqrt{\alpha}|x|\}$.

Hence, propositions \ref{prop:Lew:subKal}, \ref{prop:Lew:konv} and \ref{prop:Lew:ganz} ensure that $\mathcal{C}^\alpha_m$ is a local $\alpha$-perimeter sub-minimizer in the whole $\R^m\times\R_{\geq0}$.

In view of the characterization of $\alpha$-perimeter minimizing sets, cf. proposition \ref{prop:Lew:BedMin}, the claim of theorem \ref{Satz:Lew:1} follows for
\[
\alpha\geq\frac{2m^{\nicefrac32}+3m-1}{(m-1)^2},
\]
after proving the sub-minimality of the complement of $\mathcal{C}^\alpha_m$. We therefor argue as above considering the sets
\[
 D_k\coloneqq\left\{\, z\in\R^m\times\R_{\geq0}: F_{m,\alpha}(z)\leq-\frac1k  \,\right\}
\]
and the vector field
\[
 \xi_{-}(z)\coloneqq y^\alpha\frac{\nabla F_{m,\alpha}(z)}{|\nabla F_{m,\alpha}(z)|} \quad\text{on ~ $\{F_{m,\alpha}<0\}$.}\qedhere
\]
\end{proof}

\begin{remark}
 All previous computations were carried out by hand.
\end{remark}

\begin{remark}
 For $m\geq14$ we have $\frac{2m^{\nicefrac32}+3m-1}{(m-1)^2}>\frac{12}{m}$ and $\frac{12}{m}$ is an upper bound for our best $\alpha_m$'s.
\end{remark}

\begin{remark}\label{Lew:Bed}
 Improvements of these bounds can be achieved by an alternative auxiliary function. As seen in the proof, such a function $F$ should fulfill the following conditions
 \begin{enumerate}[1.]
 \item $F\in C^2(\big(\R^m\times\R_{>0}\backslash\{\,x=0\,\}\big)\backslash\mathcal{M}^\alpha_m)\cap C^0(\R^m\times\R_{\geq0}) $,
 \item $\{\,F\geq0\,\}=\mathcal{C}^\alpha_m$, \quad $\{\,F=0\,\}=\partial \mathcal{C}^\alpha_m=\mathcal{M}^\alpha_m$,
 \item $\displaystyle F\cdot\operatorname{div}\left(-y^\alpha\frac{\nabla F}{|\nabla F|}\right)\leq0$  ~ in $\{\,\nabla F \neq0\,\}$.
 \end{enumerate}
\end{remark}

\begin{remark}\label{rem:minimizingArea}
In fact, corresponding auxiliary functions can be found in papers concerning the minimizing property of Lawson's cones
, namely
\begin{itemize}
 \item in \cite{MM}: $F(x,y)=\big(|x|^2-|y|^2\big)\big(|x|^2+|y|^2\big)$, for $k=h=4$.
 \item in \cite{CM}: $$F(x,y)=\big((h-1)|x|^2-(k-1)|y|^2\big)\big((5k-h-4)(h-1)|x|^2-(5h-k-4)(k-1)|y|^2\big),$$
 for $k+4<5h$ and $(k,h)\neq(3,5)$, and for $h+4<5k$ and $(k,h)\neq(5,3)$.
 \item in \cite{BM}: $$F(x,y)= \big((h-1)|x|^2-(k-1)|y|^2\big)\cdot\begin{cases}\big((h-1)|x|^2\big)^\beta, \text{ in ``$\{F>0\}$'',}\\\big((k-1)|y|^2\big)^\beta, \text{ in ``$\{F<0\}$'',} \end{cases}$$
 where $\beta$ was chosen in a way, that such an argumentation was admissible for all Lawson's cones.
 \item in \cite{dPP}: $F(x,y)=\frac14\big(|x|^2-|y|^2\big)\big(|x|^2+|y|^2\big)$, for $k=h\geq4$.
\end{itemize}
Note that
\begin{itemize}
 \item
in \cite{CM,BM} computer algebra systems were used to perform the symbolic manipulations.
\item
the argumentation using sub-calibration method from \cite{dPP} is applicable to the function
\[F(x,y)=\frac{1}{4}\big((h-1)|x|^2-(k-1)|y|^2\big)\big((h-1)|x|^2+(k-1)|y|^2\big) \]
and yields the minimality of all Lawson's cones with
\begin{align*}
 (k,h)\notin\{&(2,7),(2,8),(2,9),(2,10),(2,11),(3,5),\\ &(5,3),(7,2),(8,2),(9,2),(10,2),(11,2)\}.
\end{align*}
However, we have already performed such computations above and the exceptional cases correspond to the given bounds in lemma \ref{Lemma:Lew:Q} for integer values, where $k$ and $h$ take over the parts of $m$ and $\alpha+1$.
\end{itemize}
\end{remark}

\begin{remark}\label{rem:Davini}
 With the aid of a suitable parametrization \textsc{Davini} detected the existence of an auxiliary function which was applicable to all Lawson's cones. All his computations he carried out by hand, cf. \cite{Davini}.
\end{remark}

\section{Second proof of theorem \ref{Satz:Lew:1} with better bounds}
Since the hypercones $\mathcal{M}^\alpha_m=\partial\mathcal{C}^\alpha_m$ are invariant under the action of $SO(m)$ on the first $m$ components, we will look for a foliation consisting of extremal hypersurfaces with the same type of symmetry. In fact, recalling \eqref{eq:Lew:varprob}, a dimension reduction and the special parametrization\footnote{Note that the simplification in \cite{Davini} towards the argumentation as in \cite{BdGG} comes from such a parametrization.}
\begin{equation}\label{eq:Lew:parametKurve}
 \begin{cases}
  |x| &= \mathrm{e}^{v(t)}\cdot\cos t,\\
  \hphantom{|}y &=\mathrm{e}^{v(t)}\cdot\sin t,
 \end{cases}
\end{equation}
with  $v\in C^2(0,\frac\pi2)$ yields as Euler-Lagrange equation
\begin{equation}\label{eq:Lew:DiffglZ}
 \ddot{v}=\Big(1+\dot{v}^2\Big)\cdot\left\{m+\alpha+\frac{m-\alpha-1-(m+\alpha-1)\cos(2t)}{\sin(2t)}\cdot \dot{v}\right\},
\end{equation}
cf. \cite{Davini}, where $m$ and $\alpha$ take over the parts of $k$ and $h-1$.

Hence, with $w\coloneqq \dot{v}$ the initial problem reduces to a question about the behavior of solutions of the following ordinary differential equation of first order:
\begin{equation}\label{eq:Lew:DiffglY}
 \dot{w}=\big(1+w^2\big)\cdot\left\{m+\alpha+\frac{m-\alpha-1-(m+\alpha-1)\cos(2t)}{\sin(2t)}\cdot w\right\}.
\end{equation}
The existence of a solution follows, for example, from the existence of an upper and a lower solution of \eqref{eq:Lew:DiffglY}. Arguing as \textsc{Davini} we will directly give an upper solution and the difficult part is in finding the conditions on $m$ and $\alpha$ under which a suitable lower solution exists. Note that we push the argumentation from \cite{Davini} to the extreme, since $\alpha>0$ is real valued and not necessarily an integer. Our study is based on the analysis of the quartic polynomial

\begin{gather*}
\P(\gamma)\coloneqq a_4\gamma^4+a_3\gamma^3+a_2\gamma^2+a_1\gamma+a_0,
\shortintertext{with}
\begin{aligned}
a_4&=(m+\alpha)^3,\\[1ex]
 a_3&=-(m+\alpha)^2(m+\alpha+1),\\[1ex]
 a_2&=(m+\alpha)(2m+6\alpha-4m\alpha-1),\\[1ex]
 a_1&=4m^2\alpha+4\alpha^2m-4\alpha^2-5\alpha-m+1,\\[1ex]
 a_0&=-8(m-1)\alpha.
\end{aligned}
\end{gather*}

\begin{lemma}\label{Lemma:Lew:biquadr}
 There exists an algebraic number $\alpha_m >\frac2m$ such that for all $\alpha\geq\alpha_m$ we can find a value $\gam\in(0,1-\frac{1}{m+\alpha})$ with \[\P(\gam)\geq0.\]
\end{lemma}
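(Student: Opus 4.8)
The plan is to exhibit the required $\gam$ explicitly as a particular value that makes the analysis of $\P$ tractable, and then to reduce the inequality $\P(\gam)\geq0$ to a polynomial condition on $m$ and $\alpha$ whose positive solution set is of the form $\{\alpha\geq\alpha_m\}$ for an algebraic number $\alpha_m$. Since the endpoint of the admissible interval is $1-\frac1{m+\alpha}$, the natural guess — and the one suggested by the structure of the Euler--Lagrange equation \eqref{eq:Lew:DiffglZ}, where the coefficient $\frac{m-\alpha-1-(m+\alpha-1)\cos 2t}{\sin 2t}$ degenerates at the poles $t=0,\frac\pi2$ — is to test values of $\gamma$ tied to $\frac{m-1}{m+\alpha}$ or to $\frac1{m+\alpha}$, which correspond to the slopes $w=\dot v$ that the upper solution of \eqref{eq:Lew:DiffglY} prescribes near the ends. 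So first I would substitute $\gamma=\frac{c}{m+\alpha}$ (with $c$ to be chosen, plausibly $c=m-1$ or a nearby constant) into $\P$; because $a_4=(m+\alpha)^3$, $a_3=-(m+\alpha)^2(m+\alpha+1)$ and $a_2,a_1$ carry explicit factors of $(m+\alpha)$, this clears denominators nicely and yields $\P(\gamma)=(m+\alpha)^{-1}\cdot R(m,\alpha)$ for some polynomial $R$ of manageable degree in $\alpha$ (quadratic or cubic).

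The second step is to identify $\alpha_m$ as the largest positive root of $R(m,\alpha)=0$, viewed as a polynomial in $\alpha$ with $m$ as a parameter, and to check that $R(m,\alpha)>0$ for $\alpha$ large — this is immediate from the leading coefficient of $R$ in $\alpha$ — so that $R(m,\alpha)\geq0$ precisely on a set $[\alpha_m,\infty)$. That $\alpha_m$ is algebraic is automatic since it is a root of a polynomial with integer coefficients in $m$. The membership $\gam\in(0,1-\frac1{m+\alpha})$ must then be verified: with $\gamma=\frac{c}{m+\alpha}$ this amounts to $0<c<m+\alpha-1$, which for the expected choice $c=m-1$ reads $0<m-1<m+\alpha-1$, true for every $m\geq2$ and $\alpha>0$. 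Finally one checks the strict inequality $\alpha_m>\frac2m$; this is a one-variable estimate, namely $R(m,\frac2m)<0$ (equivalently, $\frac2m$ lies below the largest root), which I would verify by plugging $\alpha=\frac2m$ into $R$ and showing the resulting expression in $m$ is negative for all $m\geq2$, e.g. by factoring out the obviously-signed pieces and bounding the rest.

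The main obstacle I anticipate is twofold. First, the \emph{right} choice of $\gam$: a single constant multiple of $\frac1{m+\alpha}$ may not keep $\P(\gam)\geq0$ down to a threshold as low as the claimed $\alpha_m$ (which remark \ref{rem:bounds} says is near $5.88$ for $m=2$ and below $\frac{12}m$ in general), so one may instead need $\gam$ to depend on $\alpha$ and $m$ in a more delicate, possibly non-rational way — for instance the location of a critical point of $\P$, forcing one to work with the discriminant of $\P$ rather than a point evaluation. If that is the route, the analysis parallels Lemma \ref{Lemma:Lew:Q}: one studies the sign pattern of the coefficients $a_0,\dots,a_4$ (note $a_0=-8(m-1)\alpha<0$, so $\P(0)<0$, and $a_4>0$, so $\P$ has a positive root regardless), applies Descartes' rule to count positive roots, and then uses the quartic's discriminant to pin down when $\P$ acquires a value $\geq0$ on the relevant interval, with $\alpha_m$ emerging as the positive root of the resulting resultant/discriminant polynomial $\p$ in $\alpha$. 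Second, even granting the choice of $\gam$, the ensuing polynomial inequality $R(m,\alpha)\geq0$ is a statement about a \emph{family} of polynomials indexed by the integer $m$, and the bound $\alpha_m>\frac2m$ has to hold uniformly in $m$; handling the $m\to\infty$ asymptotics (where $\alpha_m\to0$) together with the small cases $m=2,3,\dots$ is where the careful polynomial bookkeeping — the ``careful analysis of special cubic and quartic polynomials'' promised in the abstract — really sits, and I would expect to split into an asymptotic regime plus a finite check, or to exhibit an explicit lower bound for $\alpha_m$ (as in remark \ref{rem:bounds}) that is manifestly $>\frac2m$.
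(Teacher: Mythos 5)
Your primary route --- evaluating $\P$ at an explicit point $\gamma=\frac{c}{m+\alpha}$ --- does not work as you set it up, and cannot work in the form the lemma is actually used. Substituting $\gamma=\frac{c}{m+\alpha}$ gives $(m+\alpha)\,\P(\gamma)$ equal to a quadratic in $\alpha$ whose leading coefficient is $4(m-1)(c-2)$; with your suggested choice $c=m-1$ this is negative for $m=2$ and vanishes for $m=3$ (where the whole expression collapses to $-18\alpha<0$), so $\P(\gamma)<0$ for all large $\alpha$ and the ``expected choice'' fails outright in the lowest dimensions. More fundamentally, the paper's proof shows that $\P$ is strictly negative at both endpoints $0$ and $1-\frac{1}{m+\alpha}$, has exactly one root below $0$ and exactly one root above $1-\frac{1}{m+\alpha}$, and that at the threshold $\alpha=\alpha_m$ the only point of the interval where $\P\geq0$ is a \emph{double root} of $\P$. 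Hence the existence of an admissible $\gam$ is genuinely a discriminant condition, not a point-evaluation condition: any prescribed rational formula for $\gamma$ can only produce a strictly larger (hence weaker) threshold unless it accidentally hits that double root, and such a weakened threshold would not support the characterization of $\alpha_m$ (unique positive root of $\p$), the bound $\alpha_m<\frac{12}{m}$, or the numerical values in remark \ref{rem:bounds} that the rest of the paper relies on.

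Your fallback sketch does point toward the paper's actual argument, but it is missing precisely the decisive steps. The paper localizes the roots by applying Descartes' rule both to $\P(-\gamma)$ (using $a_1>0$ for $\alpha>\frac2m$) and to the shifted polynomial $\P(\gamma+1-\frac{1}{m+\alpha})$, concluding that the two remaining roots, when real, must lie in $(0,1-\frac{1}{m+\alpha})$; only then does a discriminant decide the question, via the depressed quartic \eqref{eq:Lew:reduziert} and its resolvent cubic \eqref{eq:Lew:kubRes}, for which one must verify $p<0$ and $p^2-4r>0$. This reduces the lemma to the nonnegativity of the explicit degree-eight polynomial $\p(\alpha)$, and here your proposed tool, Descartes' rule, provably does not suffice: $\p$ has three sign changes for $m=2,\dots,6$ and five for $m\geq7$, which is why the paper runs a Sturm-chain count to show $\p$ has exactly one positive root $\alpha_m$. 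Finally the inequalities $\p(\frac2m)<0$ and $\p(\frac{12}{m})>0$ yield $\frac2m<\alpha_m<\frac{12}{m}$; in particular $\alpha_m>\frac2m$ is obtained from $\p$, not from the quadratic $R$ of your first route. Since none of these computations (endpoint signs, root localization, the resolvent-cubic inequalities, the polynomial $\p$, the Sturm chain, the evaluations at $\frac2m$ and $\frac{12}{m}$) are carried out, and your two alternative routes are left unresolved, the proposal has a genuine gap.
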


\begin{proof}
Note that
 \begin{align*}
 \P(0)&=-8(m-1)\alpha<0\\ \shortintertext{and}  \P(1-\textstyle\frac{1}{m+\alpha})&=-\frac{8(m-1)\alpha}{m+\alpha}<0.
\end{align*}
Further, for all admissible $m\in\{\,2,3,\ldots\,\}$ and $\textstyle\alpha>\frac2m$ the coefficients of  $\P$ fulfill:
\begin{align*}
 a_4 &=(m+\alpha)^3>0,\\[1ex]
 a_3 &=-(m+\alpha)^2(m+\alpha+1)<0,\\[1ex]
 a_1 &=5\alpha(\textstyle\frac{m^2}{4}-1)+4\alpha^2(m-1)+m(\frac{11}{4}m\alpha-1)+1>0,\\[1ex]
 a_0 &=-8(m-1)\alpha<0,
\end{align*}
consequently, $\P(-\gamma)$ has, regardless of the value $a_2$, always one sign change in the sequence of its coefficients $ a_4,\ -a_3,\ a_2,\ -a_1,\ a_0$. Hence, due to Descartes' rule of signs,  $\P$ always has one negative root. Moreover, we have
\begin{align*}
&\P(\gamma+1-\textstyle\frac{1}{m+\alpha})= \widetilde{a}_4\gamma^4+\widetilde{a}_3\gamma^3+\widetilde{a}_2\gamma^2+\widetilde{a}_1\gamma+\widetilde{a}_0,\\[1.5ex]
&\quad\begin{aligned}
 \text{with} \quad &\widetilde{a}_4=(m+\alpha)^3>0,\\[1ex]
 &\widetilde{a}_3=(m+\alpha)^2(3m+3\alpha-5)>0,\\[1ex]
 &\widetilde{a}_2=(m+\alpha)\{(m-2)(3m-4)+\textstyle\frac{2\alpha}{m}(m^2-3m+\frac32\alpha m)\}>0,\\[1ex]
 &\widetilde{a}_0=-\textstyle\frac{8(m-1)\alpha}{m+\alpha}<0,
 \end{aligned}
\end{align*}
thus, regardless of the value $\widetilde{a}_1$, we always have one sign change in the sequence of coefficients of the polynomial $\P(\gamma+1-\textstyle\frac{1}{m+\alpha})$. In other words, $\P$ always has one root in $(1-\frac{1}{m+\alpha},\infty)$.

All in all, $\P$ has none, a double or two distinct roots in the interval $(0,1-\frac{1}{m+\alpha})$. To determine the nature of roots of the quartic equation
 \begin{equation}\label{eq:Lew:biquadr}
  \P(\gamma)=0.
 \end{equation}
we convert it by the change of variable $\gamma=u+\frac{m+\alpha+1}{4(m+\alpha)}$ to the depressed quartic
 \begin{equation}\label{eq:Lew:reduziert}
   u^4+pu^2+qu+r=0, \tag{\ref{eq:Lew:biquadr}*}
 \end{equation}
 with coefficients
  \begin{align*}
  p &= \textstyle -\frac{1}{8(m+\alpha)^2}\{3m^2-10m+11+3\alpha^2+2(19m-21)\alpha\}<0 ,\\[1.7ex]
  q &= \textstyle -\frac{1}{8(m+\alpha)^3}\{ \alpha^3+\alpha^2(11-13m)-\alpha(m-1)(13m+23)+(m-3)(m-1)^2\}, \\[1.7ex]
  r &= \textstyle -\frac{1}{256(m+\alpha)^4} \begin{aligned}[t]\{3 \alpha ^4+172 \alpha ^3-1630 \alpha ^2+204 \alpha +3 m^4-180 \alpha  m^3-20 m^3-366\alpha ^2 m^2 \\[1ex]+1796 \alpha  m^2+34 m^2-180 \alpha ^3 m+1988 \alpha ^2 m-1788 \alpha  m+12 m-45 \}, \end{aligned}
 \end{align*}
and consider its resolvent cubic, namely
 \begin{equation}\label{eq:Lew:kubRes}
 \zeta^3+2p\zeta^2+(p^2-4r)\zeta-q^2=0 \tag{\ref{eq:Lew:reduziert}*}.
\end{equation}
We have $p<0$ and $p^2-4r>0$ as
\begin{align*}
16(m+\alpha)^4(p^2-4r)=&3 \alpha ^4+4 (3m-5)\alpha ^3+\left(274 m^2-316 m+50\right)\alpha ^2 \\[1ex]
& \quad+4(m-1) (3 m^2+52m+45)\alpha+(m-1)^2 (3 m^2-14m+19).
\end{align*}
Consequently, \eqref{eq:Lew:kubRes} has no negative roots, since there is no sign change in the sequence of the coefficients $-1,\ 2p,\ 4r-p^2,\ -q^2$. On the other hand, \eqref{eq:Lew:kubRes} has one or three positive roots depending on the sign of its discriminant
\[
 \theta=4p^2(p^2-4r)^2-4(p^2-4r)^3-36p(p^2-4r)q^2+32p^3q^2-27q^4.
\]
In view of the foregoing, it follows:
\begin{enumerate}[i)]
 \item If $\theta>0$, then $\P$ has two distinct roots in $(0,1-\frac{1}{m+\alpha})$.
 \item If $\theta=0$, then $\P$ has one double root in $(0,1-\frac{1}{m+\alpha})$.
 \item If $\theta<0$, then $\P$ has no roots in $(0,1-\frac{1}{m+\alpha})$.
 \end{enumerate}
So, the statement of the lemma follows for such values of $m$ and $\alpha$ for which $\theta=\theta_m(\alpha)\geq0$. We have\label{Lew:Polynom}:
\begin{align*}
\frac{(m+\alpha)^{12}}{16\alpha(m-1)}\cdot\theta_m(\alpha)=&\\
&\hspace{-5.7em}  16(m-1)^2\alpha^8 \\[1ex]
&\hspace{-5.7em} - 4(m-1)(8m^2+3)\alpha^7 \\[1ex]
&\hspace{-5.7em} - (16 m^4-256 m^3+584 m^2-496 m+153)\alpha^6 \\[1ex]
&\hspace{-5.7em} + 2 (32 m^5-224 m^4+1238 m^3-2738 m^2+2545 m-852)\alpha^5 \\[1ex]
&\hspace{-5.7em} - (m-1) (16 m^5+48 m^4-1712 m^3+6672 m^2-4321 m-641)\alpha^4\\[1ex]
&\hspace{-5.7em} - 2 (16 m^7-208 m^6+250 m^5+2302 m^4-3214 m^3-588 m^2+1566 m-123)\alpha^3\\[1ex]
&\hspace{-5.7em} + (16 m^8-192 m^7+984 m^6-2864 m^5+1001 m^4+4184 m^3-3870 m^2+794 m-52)\alpha^2\\[1ex]
&\hspace{-5.7em} - 2 (m-1) (22 m^6-148 m^5+363 m^4-381 m^3+185 m^2-60 m+2)\alpha\\[1ex]
&\hspace{-5.7em} - (m-2)^3 (m-1)^2 m \eqqcolon \p(\alpha).
\end{align*}
Note that the polynomial $\p$ has three changes of sign in its sequence of coefficients if $m=2,\ldots,6$ and five changes if $m\geq7$, so that Descartes' rule of signs is not applicable to show that $\p$ has only one positive root. To prove the latter we will now apply Sturm's theorem. For that purpose
we consider the canonical Sturm chain
\[
\p_{,0}(\alpha),  \ \p_{,1}(\alpha), \ldots, \ \p_{,8}(\alpha)
\]
and count the number of sign changes in these sequences for $\alpha=0$ and $\alpha\to\infty$:

$$
\begin{array}{ll"c:c|}
 &  &  \alpha= 0 & \alpha\to\infty \\[0.5ex] \noalign{\hrule height 1pt}
 \rule{0pt}{25pt} \multirow{6}{4ex}{\rotatebox[origin=c]{90}{\text{sign of}}}
 & \p_{,0}(\alpha) & \text{\renewcommand{\arraystretch}{1.4}\footnotesize$\begin{array}{@{}cl@{}} 0 & m=2 \\ - & m\geq3 \end{array}$} & + \\[3.5ex] \cdashline{2-4}
 \rule{0pt}{20pt} & \p_{,1}(\alpha) & - & + \\[2ex] \cdashline{2-4}
 \rule{0pt}{20pt} & \p_{,2}(\alpha) & + & + \\[2ex] \cdashline{2-4}
 \rule{0pt}{25pt} & \p_{,3}(\alpha) & + &  \rule{10pt}{0pt}\text{\renewcommand{\arraystretch}{1.4}\footnotesize$\begin{array}{@{}cl@{}} - & m=2,\ldots,28 \\ + & m\geq29 \end{array}$}\rule{10pt}{0pt} \\[3.5ex] \cdashline{2-4}
 \rule{0pt}{25pt} & \p_{,4}(\alpha) & \text{\renewcommand{\arraystretch}{1.4}\footnotesize$\begin{array}{@{}cl@{}} - & m=2 \\ + & m\geq3 \end{array}$} & - \\[3.5ex] \cdashline{2-4}
 \rule{0pt}{35pt} & \p_{,5}(\alpha) & \text{\renewcommand{\arraystretch}{1.3}\footnotesize$\begin{array}{@{}cl@{}} - & m=2,3 \\ + & m=4,5 \\ - & m\geq 6 \end{array}$} & \text{\renewcommand{\arraystretch}{1.3}\footnotesize$\begin{array}{@{}cl@{}} - & m=2,\ldots,4 \\ + & m=5,\ldots,10 \\ - & m\geq11 \end{array}$} \\[5ex] \cdashline{2-4}
 \rule{0pt}{25pt} & \p_{,6}(\alpha) & \text{\renewcommand{\arraystretch}{1.4}\footnotesize$\begin{array}{@{}cl@{}} + & m=2 \\ - & m\geq3 \end{array}$} & \text{\renewcommand{\arraystretch}{1.4}\footnotesize$\begin{array}{@{}cl@{}} + & m=2,\ldots,22 \\ - & m\geq23 \end{array}$}  \\[3.5ex] \cdashline{2-4}
 \rule{0pt}{25pt} & \p_{,7}(\alpha) & \rule{10pt}{0pt}\text{\renewcommand{\arraystretch}{1.4}\footnotesize$\begin{array}{@{}cl@{}} + & m=2,\ldots,6 \\ - & m\geq7  \end{array}$}\rule{10pt}{0pt} & + \\[3.5ex] \cdashline{2-4}
 \rule{0pt}{20pt} & \p_{,8}(\alpha) & + & + \\[2ex]
 \noalign{\hrule height 1pt}
 \multicolumn{2}{c"}{\text{sign changes}}  & \rule{0pt}{20pt} 3 & 2 \\[1.5ex]\hline
\end{array}
$$
Hence, due to Sturm's theorem, the polynomial $\p$ has always $3-2=1$ positive root which we denote by $\alpha_m$. Moreover we have
\begin{align*}
 m^8\cdot \p\left(\frac2m\right)= {} & -25 m^{14}-80 m^{13}+1611 m^{12}-5114 m^{11}-2544 m^{10}-19620 m^9\\[-0.2ex]
 & +65904 m^8 -135888m^7 +228832 m^6 -215760 m^5+111152 m^4 \\[1.2ex]
 & -18688 m^3 -7232 m^2-6656 m+4096 <0
 \shortintertext{and}
 m^8\cdot\p\left(\frac{12}{m}\right)= {} & 1775 m^{14}-23560 m^{13}+74111 m^{12}+324326 m^{11}-1065244 m^{10}\\[-0.2ex]
 &-8010880m^9 +62969424 m^8-283180848 m^7+790863552 m^6\\[1.2ex]
 & -674075520 m^5-1637169408 m^4+2203656192 m^3+5992869888m^2 \\[1.2ex]
 &-13329432576 m+6879707136  > 0 \quad \text{for all $m\geq2$},
\end{align*}
thus,
\[
\frac2m<\alpha_m<\frac{12}{m} \ {}.\qedhere
\]
\end{proof}
\begin{remark}
The lengthy symbolic manipulations were completed here with the aid of the \emph{Wolfram Language} on a \emph{Raspberry Pi 2, Model B}. The following computations will again be carried out by hand:
\end{remark}
\begin{proof}[Proof of theorem \ref{Satz:Lew:1}]
Denoting the right-hand side of \eqref{eq:Lew:DiffglY} by $\Hdif(t,w)$ we see that
 \[
 \g(t)\coloneqq(m+\alpha)\cdot\frac{\sin(2t)}{(m+\alpha-1)\cos(2t)-(m-\alpha-1)}
\]
fulfills
\[\Hdif(t,\g(t))=0 \quad \text{on $(0,\t)\cup(\t,\textstyle\frac\pi2)$,}\]
where
\[
 \t\coloneqq\frac12\arccos\left(\frac{m-\alpha-1}{m+\alpha-1}\right) = \arctan\sqrt{\frac{\alpha}{m-1}}.
\]
Since $\g'(t)\geq0$, the function $\g$ is an upper solution of \eqref{eq:Lew:DiffglY}. As we are interested in a solution of \eqref{eq:Lew:DiffglY}, which has the same growth properties as $\g$, it is natural to ask for a lower solution of the form $\gamma\cdot\g$ with $ \gamma\in(0,1)$, i.e., we should have
\begin{equation}\label{eq:Lew:Unterfkt}
 \gamma\cdot \g'(t) \leq \Hdif(t,\gamma\cdot \g(t)) \qquad \text{for all $t\in (0,\t)\cup(\t,\textstyle\frac\pi2)$.}
\end{equation}
For $t\neq\t$ this is equivalent to
\begin{align}\label{eq:Lew:Unterfkt**}
&\qquad a\cdot\cos^2(2t)-2b\cdot\cos(2t)+c\geq0,\tag{\ref{eq:Lew:Unterfkt}*}
\shortintertext{with}
 a&= (1-\gamma)\big((m+\alpha-1)^2-\gamma^2(m+\alpha)^2 \big),\notag\\[1ex]
 b&= (m-\alpha-1)\big(m+\alpha-1 -\gamma(m+\alpha) \big),\notag\\[1ex]
 c&= (1-\gamma)\gamma^2(m+\alpha)^2-2\gamma(m+\alpha-1)+(1-\gamma)(m-\alpha-1)^2.\notag
\end{align}
Note that \eqref{eq:Lew:Unterfkt**} is valid on $(0,\frac\pi2)$ as long as $\gamma\in(0,1-\frac{1}{m+\alpha})$. The latter is equivalent to $a>0$. Hence, the left hand side of  \eqref{eq:Lew:Unterfkt**} is bounded below by
 $$ c-\frac{b^2}{a}.$$
 In other words, to find an adequate lower solution, it suffices to find conditions on $m$ and $\alpha$ under which a $\gamma\in(0,1-\frac{1}{m+\alpha})$ exists with
\begin{align*}
 c-\frac{b^2}{a}&\geq0 \\[2ex]
  \hspace{-8em}\underset{m\geq2, \ \alpha>\frac2m}{\overset{\gamma\in(0,1)}{\Longleftrightarrow}}\quad \P(\gamma) &\geq0,
\end{align*}
and lemma \ref{Lemma:Lew:biquadr} yields the desired conclusion. Consequently, we gain for $\gam$:
\begin{equation*}
 \gam\cdot \g'(t) \leq \Hdif(t,\gam\cdot \g(t)) \qquad \text{on $(0,\t)\cup(\t,\textstyle\frac\pi2)$,}
\end{equation*}
i.e., the function $\gam\cdot \g$ is a lower solution of \eqref{eq:Lew:DiffglY}, so that we can proceed as in \cite{Davini}: Due to results from classical ordinary differential equations theory it follows the existence of a $C^1$-solution $\w$ of \eqref{eq:Lew:DiffglY} on $(0,\t)\cup(\t,\textstyle\frac\pi2)$. Moreover, $\w$ satisfies
\begin{align*}
 &0 <\gam\cdot \g(t)\leq \w(t)\leq \g(t) \quad \text{on $(0,\t)$}
 \shortintertext{and}
 \quad & 0> \gam\cdot \g(t)\geq \w(t)\geq \g(t) \quad \text{on $(\t,\textstyle\frac\pi2)$,}
 \end{align*}
as well as
\begin{gather*}
 \lim_{t\nearrow\t} \w(t) = +\infty, \quad \lim_{t\searrow\t} \w(t) = -\infty,\\[2ex] \qquad \lim_{t\searrow0}\w(t)=0=\lim_{t\nearrow\frac{\pi}{2}}\w(t).
\end{gather*}
Let us denote by $\v$ the antiderivative of $\w$ with
$$\lim\limits_{t\searrow0}\v(t)=0 \quad\text{and}\quad \lim\limits_{t\nearrow\frac\pi2}\v(t)=0.$$
Reconstructing the auxiliary function from its level curves which are parametrized by
\begin{equation*}
 \begin{cases}
  |x| &= \lambda\cdot\mathrm{e}^{\v(t)}\cdot\cos t,\\
  \hphantom{|}y &=\lambda\cdot\mathrm{e}^{\v(t)}\cdot\sin t,
 \end{cases}
\end{equation*}
with $\lambda>0$ and $t\in(0,\t)\cup(\t,\frac\pi2)$, we gain
\begin{equation*}
 \F(x,y)\coloneqq\begin{cases}
                        &\sqrt{|x|^2 + y^2}\cdot\mathrm{e}^{-\v(\arctan\frac{y}{|x|})}, \quad 0<\arctan\frac{y}{|x|}<\t, \\[2ex]
                        -\hspace*{-3ex}&\sqrt{|x|^2 + y^2}\cdot\mathrm{e}^{-\v(\arctan\frac{y}{|x|})}, \quad \t<\arctan\frac{y}{|x|}<\frac{\pi}{2}.
                       \end{cases}
\end{equation*}
Note that, since $\v$ satisfies \eqref{eq:Lew:DiffglZ}, we obtain
\[
 \operatorname{div}\left(-y^\alpha\frac{\nabla \F}{|\nabla \F|}\right)=0, \quad \text{on $\big(\R^m\times\R_{>0}\backslash\{\,x=0\,\}\big)\backslash\mathcal{M}^\alpha_m$.}
\]
We than conclude as in our first proof above because $\F$ has the desired properties, cf. remark \ref{Lew:Bed}.
\end{proof}
\begin{remark}\label{Lew:Bem:Koeff}
The crucial ingredient in our argumentation was to find conditions on $m\geq2$ and $\alpha>0$ under which a $\gamma\in(0,1)$ exists such that \eqref{eq:Lew:Unterfkt**} is fulfilled on $(0,\t)\cup(\t,\frac{\pi}{2})$.
For $t\to\t$ the inequality \eqref{eq:Lew:Unterfkt**} is equivalent to
\[
(1-\gamma)\gamma\geq\frac{2(m+\alpha-1)}{(m+\alpha)^2}.
\]
The last inequality has solutions in $(0,1)$ as long as $m+\alpha\geq4+\sqrt{8}$.
Hence, $$\max\{\,4-m+\sqrt{8},0\,\}$$ are lower bounds for the optimal $\alpha_m$'s. With our values we have already reached the lower bounds quite close, so, for $m=4$ we have
\[
\alpha_4-\sqrt{8}<\frac{1}{1000}.
\]
\end{remark}

\begin{acknowledgement}
This paper is a part of my PhD thesis written under supervision of Prof. Ulrich \textsc{Dierkes}.
\end{acknowledgement}

\end{document}